\newif\ifsubmitted
\newcommand{\trinom}[4]{{\left(\begin{array}{c}{#1}\\ #2 ,  #3 ,
      #4\end{array}\right)}}
\newcommand{\dvec}[1]{\overrightarrow{#1}}
\definecolor{Grey}{rgb}{.5,.5,.5}
\definecolor{Blue}{rgb}{.0,.0,0.9}
\definecolor{LightBlue1}{rgb}{.2,.4,0.9}
\definecolor{LightBlue2}{rgb}{.3,.5,0.9}
\definecolor{LightBlue3}{rgb}{.4,.6,0.9}
\definecolor{LightBlue4}{rgb}{.5,.7,.9}
\definecolor{LightBlue5}{rgb}{.6,.8,.9}
\definecolor{LightBlue6}{rgb}{.7,.9,.9}
\definecolor{Red}{rgb}{.9,.0,.0}
\definecolor{LightRed1}{rgb}{0.9,.2,.4}
\definecolor{LightRed2}{rgb}{0.9,.3,.5}
\definecolor{LightRed3}{rgb}{0.9,.4,.6}
\definecolor{LightRed4}{rgb}{.9,.5,.7}
\definecolor{LightRed5}{rgb}{.9,.6,.8}
\definecolor{LightRed6}{rgb}{.9,.7,.9}
\newcounter{noalgo}[section]
\newdimen\indentalgo
\newdimen\indentalgodec\indentalgo=0.0mm\indentalgodec=10mm
\newcommand{\If}{\advance\indentalgo by \indentalgodec {\bf if }}
\newcommand{\For}{\global\advance\indentalgo by \indentalgodec {\bf for }}
\newcommand{\Endindent}{\global\advance\indentalgo by -\indentalgodec}
\newdimen\decalage \decalage=0.5cm
\newcounter{algo} \setcounter{algo}{0}
\let\set\mathbb
\def\<<{\leavevmode
  \raise0.28ex\hbox{$\scriptscriptstyle\langle\!\langle$}\nobreak
  \hskip -.6pt plus.3pt minus.2pt\,}
\def\>>{\,\nobreak\hskip -.6pt plus.3pt minus.2pt
  \raise0.28ex\hbox{$\scriptscriptstyle\rangle\!\rangle$}}
\def\HT{{\hat T}}
\def\hu{{\hat u}}
\def\htheta{{\hat \theta}}
\def\hsigma{{\hat \sigma}}
\def\vvv{{\overrightarrow{v}}}
\def\vvvone{{\overrightarrow{v_1}}}
\def\vvvtwo{{\overrightarrow{v_2}}}
\def\vuNinv{{\overrightarrow{u_N}^{(-1)}}}
\def\vuN{{\overrightarrow{u_N}}}
\def\veinv{{\overrightarrow{e}^{(-1)}}}
\def\vegot{{\dvec{\egot}}}
\def\vigot{{\dvec{\igot}}}
\def\vhigot{{\dvec{\higot}}}
\def\valpha{{\dvec{\alpha}}}
\def\vgamma{{\dvec{\gamma}}}
\def\viota{{\dvec{\iota}}}
\def\hiota{{\hat \iota}}
\def\hviota{{\dvec{\hiota}}}
\def\vhiota{{\dvec{\hiota}}}
\def\vbeta{{\dvec{\beta}}}
\def\vxN{{\dvec{x_N}}}
\def\hu{{\hat u}}
\def\Res{\mathop{\rm{Res}}\nolimits }
\def\Ker{\mathop{\rm{Ker}}\nolimits }
\def\Spec{\mathop{\rm{Spec}}\nolimits }
\def\Tr{\mathop{\rm{Tr}}\nolimits }
\def\FF{{\set F}}
\def\bFF{{\overline {\set F}}}
\def\Fp{{\FF _p}}
\def\bK{{\bf K  }}
\def\bM{{\bf M  }}
\def\bbK{\overline{\bf K  }}
\def\bL{{\bf L  }}
\def\bbL{\overline {\bf L  }}
\def\bL{{\bf L  }}
\def\bG{{\bf  G }}
\def\NN{{\set N}}
\def\ZZ{{\set Z}}
\def\agot{{\mathfrak a}}
\def\Fgot{{\mathfrak F}}
\def\Ngot{{\mathfrak N}}
\def\egot{{\mathfrak e}}
\def\ve{{\dvec{e}}}
\def\xgot{{\mathfrak x}}
\def\ygot{{\mathfrak y}}
\def\tgot{{ t}}
\def\ngot{{\mathfrak n}}
\def\igot{{\mathfrak u}}
\def\higot{\hat{\mathfrak u}}
\def\dmin{{\rm {d_{min}}}}
\def\dmax{{\rm {d_{max}}}}
\def\bgot{{\mathfrak b}}
\def\cA{{\mathcal A}}
\def\cG{{\mathcal G}}
\def\cI{{\mathcal I}}
\def\cJ{{\mathcal J}}
\def\cK{{\mathcal K}}
\def\cL{{\mathcal L}}
\def\cO{{\mathcal O}}
\def\cQ{{\mathcal Q}}
\def\cR{{\mathcal R}}
\def\cS{{\mathcal S}}
\def\cT{{\mathcal T}}
\def\cU{{\mathcal U}}
\def\cV{{\mathcal V}}
\def\cW{{\mathcal W}}
\def\pgot{{\mathfrak p}}
\newtheorem{lemma}{Lemma}
\newtheorem{theorem}{Theorem}
\newtheorem{corollary}{Corollary}
\newtheorem{definition}{Definition}
\newproof{proof}{Proof}
\begin{document}

\ifsubmitted

\title{Elliptic periods and primality proving\tnoteref{t1}} 
\tnotetext[t1]{%
  Research supported by the ``D{\'e}l{\'e}gation G{\'e}n{\'e}rale pour
  l'Armement'' and by the ``Agence Nationale de la Recherche'' (project
  ALGOL).
}

\author[imt]{Tony Ezome}
\ead{tony.ezome@math.univ-toulouse.fr}
\author[celar,irmar]{Reynald Lercier}
\ead{reynald.lercier@m4x.org}

\address[imt]{Institut de Math\'ematiques de Toulouse, Universit\'e de
  Toulouse et CNRS, D{\'e}partement de Math{\'e}matiques et Informatique,
  Universit{\'e} Toulouse 2 le Mirail, 5 all{\'e}es Antonio Machado, F-31058
  Toulouse Cedex 9, France}
\address[celar]{\textsc{DGA}/\textsc{C\'ELAR}, La Roche Marguerite, F-35174
  Bruz Cedex, France}
\address[irmar]{Institut de recherche math\'ematique de Rennes, Universit\'e de
  Rennes 1 et CNRS,\\ Campus de Beaulieu, F-35042 Rennes Cedex, France}

\else

\title[Elliptic periods and primality proving]{Elliptic periods and primality proving\\(extented version)}

\author{Jean-Marc Couveignes}%
\address{Institut de Math\'ematiques de Toulouse, Universit\'e de Toulouse et
  CNRS, D{\'e}partement de Math{\'e}matiques et Informatique, Universit{\'e}
  Toulouse 2 le Mirail, 5 all{\'e}es Antonio Machado, F-31058 Toulouse Cedex
  9, France.}%
\email{jean-marc.couveignes@math.univ-toulouse.fr}

\author{Tony Ezome}%
\email{tony.ezome@math.univ-toulouse.fr}

\author{Reynald Lercier}%
\address{ DGA/C\'ELAR, La Roche Marguerite, F-35174
  Bruz Cedex, France.
}
\address{
  Institut de recherche math\'ematique de Rennes, Universit\'e de
  Rennes 1 et CNRS, Campus de Beaulieu, F-35042 Rennes Cedex, France.
}
\email{reynald.lercier@m4x.org}

\thanks{%
    Research supported by the ``D{\'e}l{\'e}gation G{\'e}n{\'e}rale pour
    l'Armement'' and by the ``Agence Nationale de la Recherche'' (project
    ALGOL)} 

\date{\today}

\maketitle

\fi

\begin{abstract}
We construct extension rings with fast arithmetic
using isogenies between elliptic curves.
As an application, we give an elliptic version of the AKS primality criterion.
\end{abstract}

\ifsubmitted
\maketitle
\bibliographystyle{alsarticle-num}
\else
\bibliographystyle{plain}
\fi


\section{Introduction}\label{section:intro}

Classical Kummer theory  considers  binomials of the
form $x^d-\alpha$ where $d\geqslant  2$ is an integer and
$\alpha$ is a unit  in a  (commutative and  unitary) ring  containing
a primitive $d$-th root of unity $\zeta$. 
The associated $R$-algebra $S=R[x]/(x^d-\alpha)$ has shown to be extremely
useful, including in very recent algorithmic applications such as integer
factoring and discrete logarithm computation \cite{JouxLercier2}, primality 
proving \cite{AKS,Berri}, fast polynomial factorization and composition 
\cite{UK},
low complexity normal basis \cite{MOVW, GL, AVS} of field extensions
and ring extensions \cite{LL03}. 

Part
of  this computational relevance is due to the purely algebraic 
properties of $S$: a finite free \'etale  $R$-algebra of rank $d$, endowed with
an  $R$-automorphism $\sigma : x\mapsto \zeta x$ such that $R$ is the ring
of invariants by $\sigma $ in $S$ (see Section~\ref{subsection:cyclic}). However, there are more geometric
properties involved. For example, we can define the degree of a non-zero
class
in $R[x]/(x^d-\alpha)$ to be the smallest degree of non-zero polynomials
in this class.   This degree is subadditive and  invariant
by the automorphism $\sigma$. To understand this, it
is sensible  to  introduce the multiplicative group 
$\bG_m = \Spec(R[x,{1}/{x}])$  over
$R$ and the multiplication by $d$ isogeny $[d] : \bG_m\rightarrow \bG_m$.
Then $x=\alpha$  defines  a section $A$ 
of $\bG_m\rightarrow \Spec(R)$ and
$S$ can be seen as 
 the residue ring at $\Fgot_A=[d]^{-1}(A)$. The kernel of $[d]$ is the disjoint
union of $d$ sections in $\bG_m(R)$. Let $T$ be the one defined by $x=\zeta$. Translation
by $T$ defines an automorphism of $\bG_m$ that stabilizes 
$\Fgot_A$. One can  then  view elements in $S$
as congruence classes of functions on $\bG_m$ modulo $\Fgot_A$. 

The main restriction of classical Kummer
theory  is that not every ring $R$ has a primitive
$d$-th root of unity. One may  look for an auxiliary extension  $R'\supset
R$ that contains such a primitive root, but this may result in many
complications and a great loss of efficiency.
Another approach, already experimented in  the context
of normal bases \cite{CL}  for finite fields extensions, consists in
replacing the multiplicative group $\bG_m$ by some
well chosen elliptic curve $E$ over $R$. We then look for a section
$T\in E(R)$ of exact order $d$. Because elliptic curves are many,
we increase our chances to find such a section. We call the resulting
algebra $S$ a ring of {\it elliptic periods} because of the strong analogy
with classical Gauss periods.  

The first half
of the present  work is  devoted to the explicit study of Kummer theory
of elliptic curves and, more specifically, to the algebraic
and algorithmic description of the residue algebras constructed
as sketched above. The resulting elliptic functions and  equations
are not quite as simple as binomials. Still they can be described
very  explicitly  and quickly,  e.g. in quasi-linear time in the
degree $d$. The geometric situation is  summarized by Theorem~\ref{th:formules} and the $R$-algebra  $S$ of elliptic periods 
is described by Theorem~\ref{th:tensor}.
The second half of the paper proposes an elliptic version of the AKS
primality criterion. A general, context free, primality criterion in the
style of Berrizbeitia is first given in
Theorem~\ref{th:AKSgen}. This criterion involves an $R$-algebra 
$S$ where $R=\ZZ/n\ZZ$ and $n$ is the integer to be tested for primality.
If we take  $S$ to be $R[x]/(x^d-\alpha)$, we recover results by 
Berrizbeitia and his followers. If we take $S$ to be a ring of elliptic
periods, we obtain the  elliptic  primality criterion 
of Corollary~\ref{th:ellaks}.

\medskip 
While  the proof of Corollary~\ref{th:ellaks} uses the results  in
Section~\ref{section:EP}, much of Section~\ref{section:AKSTEST}
is independent of Section~\ref{section:EP}. Readers only interested
in primality proving  may skim through
Section~\ref{section:EP} and  read  Section~\ref{section:AKSTEST},
then come back to 
Section~\ref{section:EP} for technical details.
\ifsubmitted
\bigskip

\noindent
\textit{Acknowledgments.} 

We wish to express our grateful acknowledgment to Jean-Marc Couveignes for his
help and guidance. Among many other things, he explained to us how elliptic
periods could yield a fast variant of the AKS primality criterion and without
him, this work would genuinely not have been completed. We are heavily
indebted to him.

\fi

\section{Isogenies between elliptic curves}\label{section:EP}

In this section, we use isogenies between elliptic curves to construct
ring extensions.  To this end, we extend the methods introduced by
Couveignes and Lercier~\cite{CL} in two different directions.
Firstly, we provide efficient explicit expressions for the constants that
appear in the multiplication tensor of the ring of elliptic periods.
Thanks to these formulae, one can construct the ring of elliptic
periods in quasi-linear time.  Secondly, we explain how these methods,
originally 
introduced in the context 
of finite fields, can be  adapted   to the more general context of
rings.

We recall in Section~\ref{subsection:ECF} more or less classical
formulae about elliptic curves and isogenies over fields. In
Section~\ref{subsection:weierstrass}, these formulae are proved to
hold true over almost any base ring. In Section~\ref{subsection:ellp}, we use
isogenies to construct extension rings
and we finally
give a numerical example in Section~\ref{subsection:example}.
\medskip 

{\bf Notation}: If  $\valpha = (\alpha_i)_{i\in\ZZ/d\ZZ}$ and 
$\vbeta = (\beta_i)_{i\in\ZZ/d\ZZ}$ are two  vectors of length $d$,
we denote by  $\valpha \star_j \vbeta = \sum_i \alpha_i\beta_{j-i}$
the $j$-th component of the convolution product.
We denote by $\sigma(\valpha)=(\alpha_{i-1})_{i}$ the cyclic
shift of $\valpha$,
by $\valpha \diamond \vbeta
=(\alpha_i\beta_i)_i$ the component-wise
product and by
$\valpha \star \vbeta=(\valpha \star_i \vbeta)_i$ the convolution product.

\subsection{Elliptic curves over fields}
\label{subsection:ECF}

In this section, $\bK$ is a field with characteristic $p$ 
and $E/\bK$ is an elliptic curve given by a  Weierstrass equation
\begin{displaymath}
  Y^2Z+a_1XYZ+a_3YZ^2=X^3+a_2X^2Z+a_4XZ^2+a_6Z^3\,.
\end{displaymath}
We set 
\begin{eqnarray*}
&&b_2 = a_1^2+4a_2\,,\ 
b_4 = a_1a_3+2a_4\,,\ 
b_6 = a_3^2+4a_6\,,\\
&&b_8 = a_1^2a_6+4a_2a_6-a_1a_3a_4+a_2a_3^2-a_4^2\,.
\end{eqnarray*}
We denote by $O=[0:1:0]$ the origin.

  Following V{\'e}lu~\cite{velucras,veluthese} and Couveignes and Lercier
\cite{CL}, we state a few identities related to a degree
$d$ separable isogeny with cyclic kernel
$I : E\rightarrow E'$. We exhibit in Section~\ref{subsubsection:ENB} a normal basis for
the field extension $\bK(E)/\bK(E')$ consisting of degree $2$ functions.
We study in Section~\ref{subsubsection:dual} the matrix of the
trace form in this normal basis.

\subsubsection{Some simple elliptic functions}

If $A$ is a point in $E(\bbK)$,
we denote by $\tau_A : E\rightarrow E$ the translation by $A$.
Following~\cite[Section 2]{CL}, we set $x_A=x\circ \tau_{-A}$
and $y_A=y\circ \tau_{-A}$.

We check that
\begin{eqnarray}\label{eq:xA}
x_A\times ( x-x(A))^2&=&\left(a_3+2y(A)+a_1x(A)\right) y + x(A) x^2 +\nonumber \\
&+& \left( a_4+a_1^2x(A)+a_1a_3+2a_2x(A)+a_1y(A)+x(A)^2\right) x\nonumber
  \\
&+&a_3^2+a_1a_3x(A)+a_3y(A)+a_4x(A)+2a_6\,.
\end{eqnarray}
We do not give an explicit expression for $y_A$ but we check that
$y_A\times (x-x(A))^3$ can be written as a polynomial in
$\ZZ[a_1,a_2,a_3,a_4,a_6,x(A),y(A),x,y]$\,.
We also check that 
\begin{equation}\label{eq:xAxmA}
(x_A-x(A))(x_{-A}-x(A))=-\frac{\psi_3(a_1,a_2,a_3,a_4,a_6,x(A))}{(x-x(A))^2}
-\frac{\hat \psi_3(a_1,a_2,a_3,a_4,a_6,x(A))}{x-x(A)}
\end{equation}
where $\psi_3(a_1,a_2,a_3,a_4,a_6,x)$ is the so called $3$-division polynomial:
\begin{equation*}
\psi_3=3x^4+b_2x^3+3b_4x^2+3b_6x+b_8\,,
\end{equation*}
and 
\begin{equation*}
\hat\psi_3=\psi_3'/3=4x^3+b_2x^2+2b_4x+b_6\,.
\end{equation*}
We also check that  the resultant of $\psi_3$ and $\hat \psi_3$ in the variable $x$ is 
\begin{equation}\label{eq:resu23}
\Res_x(\psi_3,\hat \psi_3)=-\Delta^2
\end{equation}
where $\Delta \in \ZZ[a_1,a_2,a_3,a_4,a_6]$ is the discriminant
of the elliptic curve $E$.

If $A$, $B$ and $C$ are
 three pairwise  distinct points in $E(\bbK)$,
we define  $\Gamma(A,B,C)$ as in~\cite[Section 2]{CL},
\begin{equation}\label{eq:Gamma}
  \Gamma(A,B,C)=\frac{y(C-A)-y(A-B)}{x(C-A)-x(A-B)}\,.
\end{equation}
Taking for $C$ the generic point on $E$,  we define
a function $u_{A,B}\in \bbK(E)$ by $u_{A,B}(C)=\Gamma(A,B,C)$.  
It has two simple poles: one at $A$ and one at $B$.
The  following identities
are proven in~\cite[Section 2]{CL}.
\begin{eqnarray}
  \Gamma(A,B,C)&=&\Gamma(B,C,A)=-\Gamma(B,A,C)-a_1\,,\nonumber \\
&=&-\Gamma(-A,-B,-C)-a_1\,,\nonumber\\
  u_{A,B}+u_{B,C}+u_{C,A}&=&\Gamma(A,B,C)-a_1\,,\nonumber\\
  u_{A,B}u_{A,C}&=&x_A+\Gamma(A,B,C)u_{A,C}+\Gamma(A,C,B)u_{A,B} \nonumber \\
&&+a_2
  +x_{A}(B)+x_{A}(C)\,,\label{eq:uABuAC}\\
  u_{A,B}^2&=&x_A+x_B-a_1u_{A,B}+x_A(B)+a_2\,.\label{eq:uAB2}
\end{eqnarray}
We further can prove in the same way
\begin{eqnarray*}
  x_Cu_{A,B}&=&\Gamma(A,B,C)x_C+x_B(C)u_{C,B}-x_A(C)u_{C,A}+y_A(C)-y_B(C)\,,\\
  x_Au_{A,B}&=&y_A+x_B(A)u_{A,B}-y_B(A)\,,\\
  x_Bu_{A,B}&=&-y_B-a_1x_B-a_3+x_B(A)u_{A,B}-y_B(A)\,.
\end{eqnarray*}

\subsubsection{V{\'e}lu's formulae}\label{subsubsection:VF}

Let $d\geqslant 3$ be an odd integer and let $T\in E(\bK)$ be a point of order
$d$. 
For $k$ an integer, we set $x_k=x_{kT}$, $y_k=y_{kT}$ and
following V{\'e}lu~\cite{velucras}, we define
\begin{equation}\label{eq:x'y'}
x'=x+\sum_{1\leqslant k\leqslant d-1} \left[ x_{k}-x(kT)\right]\text{ and } y'=y+
\sum_{1\leqslant k\leqslant d-1} \left[y_{k}-y(kT)\right]\,.
\end{equation}
We also set
\begin{eqnarray*}
w_4&=&\sum_{1\leqslant k\leqslant (d-1)/2}6\,x(kT)^2+b_2\,x(kT)+b_4\,,\\
w_6&=&\sum_{1\leqslant k\leqslant (d-1)/2}10\,x(kT)^3+2\,b_2\,x(kT)^2+3\,b_4\,x(kT)+b_6\,,\\
a'_4&=&a_4-5w_4\,,\\
a'_6&=&a_6-b_2w_4-7w_6\,,
\end{eqnarray*}
and
\begin{equation}\label{eq:coeff'}
a'_1=a_1,\ a'_2=a_2,\ a'_3=a_3\,.
\end{equation}

V{\'e}lu proves the identity 
\begin{equation*}
(y')^2+a_1'x'y'+a_3'y'=(x')^3+a_2'(x')^2+a_4'x'+a_6'\,.
\end{equation*}
So the map $(x,y)\mapsto (x',y')$ defines a degree $d$
isogeny 
$I : E\rightarrow E'$ where $E'$ is the elliptic curve given by the
above Weierstrass equation. \medskip

\subsubsection{Elliptic normal basis}\label{subsubsection:ENB}
 
Let
\begin{equation}\label{eq:ukdef}
U_k=u_{kT,(k+1)T}\text{ and }u_k=\agot u_{kT,(k+1)T} + \bgot
\end{equation}
where
$\agot\not = 0$ and $\bgot$ are scalars in $\bK$ chosen such that 
\begin{equation}\label{eq:sum1}
\sum_{k\in
  \ZZ/d\ZZ}u_k=1.
\end{equation}
Such scalars always exist by~\cite[Lemma 4]{CL}.
For $k$ and $l$ distinct and non-zero in $\ZZ/d\ZZ$, we set
\begin{equation}\label{eq:deuz}
\Gamma_{k,l}=\Gamma(O,kT,lT).
\end{equation}
Recall 
\begin{equation}\label{eq:uOkt}
u_{O,kT}=\frac{y-y(-kT)}{x-x(kT)}\, .
\end{equation}
We  check that
\begin{equation}\label{eq:Uk}
U_k=u_{kT,(k+1)T}=u_{O,(k+1)T}-u_{O,kT}+\Gamma_{k,k+1}\,.
\end{equation}

The  system $(u_k)_{k\in \ZZ/d\ZZ}$ is a 
basis  of $\bK(E)$ over $\bK(E')$. More precisely, we have the following
lemma, that generalizes  Lemma~5 of~\cite{CL}.
\begin{lemma}[A normal basis]\label{lemma:indep}
Let $E$ be an  elliptic curve over a field 
$\bK$. Let $T\in E(\bK)$ be a point of odd order $d\geqslant 3$ and
 $I : E\rightarrow E'$ be the  degree $d$ separable isogeny defined from $T$  
by V\'elu's formulae. Let $(u_k)_{k\in \ZZ/d\ZZ}$ be the functions
in $\bK(E)$ defined  above.
Then the  system $(u_k)_{k\in \ZZ/d\ZZ}$ is a $\bK(E')$-basis of 
$\bK(E)$.\smallskip

Moreover, let $\bL \supset \bK$ be an extension of
$\bK$ and let $A\in E'(\bL)$ be a non-zero point. Let 
$B\in E(\bbL)$ be a point on $E$ such that $I(B)=A$ and let 
\begin{displaymath}
  I^{(-1)}(A)=[B]+[B+T]+[B+2T]+\cdots +[B+(d-1)T]
\end{displaymath}
be  the fiber of $I$ above $A$. 
Then the three following conditions are equivalent:
\begin{itemize}[itemsep=0.5pt,parsep=0.5pt,partopsep=0.5pt,topsep=0.5pt]
\item[(i)] The
images of the $(u_k)_{k\in \ZZ/d\ZZ}$ in the residue ring at $I^{-1}(A)$ form a
$\bL$-basis
of it;
\item[(ii)] The matrix $(u_k(B+lT))_{k,l\in \ZZ/d\ZZ}$ is invertible;
\item[(iii)] The point $A$ is not in the kernel of the dual isogeny $I':E'\rightarrow E$.
\end{itemize}
\end{lemma}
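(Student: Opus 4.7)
First I would establish the equivalence (i) $\Leftrightarrow$ (ii). Since $I$ is a separable isogeny of degree $d$ of elliptic curves, it is \'etale; hence the fiber $I^{-1}(A)$ is an \'etale $\bL$-algebra of rank $d$, and after base change to $\bbL$ the evaluation at the $d$ geometric points $B+lT$ identifies this residue ring with $\bbL^d$. Thus the images of the $u_k$ form an $\bL$-basis iff the matrix $(u_k(B+lT))_{k,l\in\ZZ/d\ZZ}$ is invertible over $\bbL$.

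Next I would exploit the translation invariance $\Gamma(A+S,B+S,C+S)=\Gamma(A,B,C)$ to rewrite $u_k(B+lT)=\agot v_{l-k}+\bgot$ with $v_m:=U_0(B+mT)$. So $M:=(u_k(B+lT))_{k,l}$ is circulant. Over $\bbK$ containing a primitive $d$-th root of unity $\zeta$, its eigenvalues are $\hat M_j = \agot \sum_m v_m\zeta^{mj}+\bgot\sum_m\zeta^{mj}$. The $j=0$ eigenvalue equals $\sum_k u_k(B)=1$ by \eqref{eq:sum1}, so it is always nonzero. For $j\neq 0$, $\hat M_j = \agot\,\theta_j(B)$ where $\theta_j:=\sum_{m\in\ZZ/d\ZZ}\zeta^{mj}U_{-m}\in\bbK(E)$; the relation $U_k\circ\tau_T=U_{k-1}$ (itself a consequence of translation invariance of $\Gamma$) gives $\theta_j\circ\tau_T = \zeta^{-j}\theta_j$.

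The main step is a divisor computation for $\theta_j$. A short local computation at $O$ using the uniformizer $-x/y$ shows that $u_{A,B}\cdot\omega$ (with $\omega$ the invariant differential) has residue $-1$ at $A$ and $+1$ at $B$; hence the residue of $\theta_j\cdot\omega$ at $kT$ is $\zeta^{-kj}(\zeta^j-1)$, nonzero for $j\neq 0$. So $\theta_j$ has exactly $d$ simple poles, at the points $kT$, and, by Riemann--Roch, $d$ simple zeros. Since $\theta_j\circ\tau_T=\zeta^{-j}\theta_j$, the zero locus is stable under translation by $T$, hence equals $R_j+\langle T\rangle$ for some $R_j\in E(\bbK)$. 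Abel's theorem together with $\sum_k kT=O$ (which uses $d$ odd) yields $dR_j=O$, so $R_j\in E[d]$; disjointness of zeros and poles forces $R_j\notin\langle T\rangle$, so $I(R_j)\in I(E[d])\setminus\{O\}=\ker I'\setminus\{O\}$.

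To finish, I would show $j\mapsto I(R_j)$ is injective on $\{1,\ldots,d-1\}$: if $I(R_j)=I(R_{j'})$, the divisors of $\theta_j$ and $\theta_{j'}$ coincide, so $\theta_j = c\,\theta_{j'}$ for some $c\in\bbK^\times$; applying $\tau_T^*$ forces $\zeta^{-j}=\zeta^{-j'}$. Counting cardinalities gives $\{I(R_j)\}_{j=1}^{d-1}=\ker I'\setminus\{O\}$. Since $A\neq O$, $M$ is singular iff some $\theta_j(B)=0$ iff $A\in\ker I'$, proving (ii) $\Leftrightarrow$ (iii). The first assertion that $(u_k)$ is a $\bK(E')$-basis of $\bK(E)$ then follows by choosing any $A$ outside this finite bad set: the matrix $M$ is invertible, so evaluating any linear relation $\sum_k f_k u_k=0$ at the fiber over $A$ forces $f_k(A)=0$, and hence $f_k=0$ identically. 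I expect the main obstacle to be the divisor analysis of $\theta_j$: pinning down the residues of $u_{A,B}\cdot\omega$ and using oddness of $d$ to locate $R_j$ inside $E[d]$ are the two non-routine ingredients.
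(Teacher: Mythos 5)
Your argument for (i)$\Leftrightarrow$(ii) and for the final global basis assertion matches the paper's. Your route to (iii), however, is genuinely different. The paper establishes (i)$\Leftrightarrow$(iii) directly with the divisor $I^{-1}(A)-I^{-1}(O')$: if some nontrivial combination $f=\sum_k\lambda_k u_k$ vanishes on the fiber then $(f)=I^{-1}(A)-I^{-1}(O')$, so by Abel's theorem $dB=I'(A)=O$ and $A\in\Ker I'$; conversely, if $A\in\Ker I'$ that divisor is principal and a generating function yields a nontrivial relation. You instead diagonalize the circulant matrix by the discrete Fourier transform, show each nonzero-frequency eigenvalue is $\agot\,\theta_j(B)$ for a function $\theta_j$ on $E$, pin down the divisor of $\theta_j$ by a residue and Abel computation, and then identify the bad set $\{I(R_j)\}_{j\ne 0}$ with $\Ker I'\setminus\{O\}$ by injectivity plus counting. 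Both proofs ultimately rest on Abel's theorem, but you apply it once per Fourier mode rather than once globally; your approach buys an explicit factorization of the circulant determinant --- which is morally the content of Eq.~\eqref{eq:Dpsi} later in the paper --- at the cost of much more bookkeeping.

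There is, however, a genuine gap. Your diagonalization requires a primitive $d$-th root of unity $\zeta$ in $\bbK$, i.e.\ $d$ prime to $\mathrm{char}\,\bK$. The lemma does not assume this, and the paper's proof is valid even when $p\mid d$: the hypothesis that $d$ and the characteristic are coprime is only introduced later, in the paragraph on the trace form. When $p\mid d$ there is no primitive $d$-th root of unity, $I'$ is inseparable so $\Ker I'$ has fewer than $d$ geometric points, and both your eigenvalue decomposition and your cardinality count collapse. As written your argument therefore proves the lemma only under the extra hypothesis $p\nmid d$. Two smaller remarks: the simplicity of the zeros of $\theta_j$ is a consequence of $\tau_T$-stability of its degree-$d$ effective zero divisor (forcing a single orbit with multiplicity one), not of Riemann--Roch by itself as you suggest; and the aside invoking $\sum_k kT=O$ and the oddness of $d$ is unnecessary, since in $\sum_k(R_j+kT)-\sum_k kT$ the $kT$-terms cancel directly, giving $dR_j=O$ with no further hypothesis.
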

\begin{proof} 
We preliminary base change $E$ and $E'$ to $\bL$
and observe that the $(u_k)_{k\in \ZZ/d\ZZ}$
are $\bL$-linearly independent
and form a basis of the linear space $\cL(I^{-1}(O'))$ where
 $O'$ is the origin on $E'$ and 
$I^{-1}(O')=[O]+[T]+[2T]+\cdots+[(d-1)T]$
is the kernel of $I$. Indeed, let 
$(\lambda_k)_{k\in \ZZ/d\ZZ}$ be scalars in $\bL$ such that 
$f=\sum_{k\in \ZZ/d\ZZ}\lambda_ku_k$ is the zero function.
Taylor expansions of $f$ at poles of $u_k$ (see~\cite[Section 2]{CL})
show that all $\lambda_k$ are equal.
Since the sum of the $u_k$ is $1$, we deduce that every 
$\lambda_k$ is zero. So the $(u_k)_{k\in \ZZ/d\ZZ}$
are $\bL$-independent. They form a basis of
$\cL(I^{-1}(O'))$ because $I^{-1}(O')$ is a degree $d$ divisor
(Riemann Roch theorem).\medskip

Now, let us prove the second part of the lemma.\smallskip

To prove  that $(i)$ and $(ii)$ are equivalent, we  notice that
a vector $(\lambda_k)_{k\in \ZZ/d\ZZ}$ is in the kernel
of the matrix  $(u_k(B+lT))_{k,l\in
  \ZZ/d\ZZ}$ if and only if $\sum_{k\in
  \ZZ/d\ZZ}\lambda_ku_k(B+lT)$ is zero for every $l\in \ZZ/d\ZZ$.
This is equivalent to the vanishing of the function $\sum_{k\in
  \ZZ/d\ZZ}\lambda_ku_k$ on the fiber $I^{-1}(A)$.
Incidentally, we notice that  the matrix $(u_k(B+lT))_{k,l\in
  \ZZ/d\ZZ}$ is circulant.
  
\smallskip

To show that $(iii)$ implies $(i)$,
let
$(\lambda_k)_{k\in \ZZ/d\ZZ}$ be scalars in $\bL$ such that 
$f=\sum_{k\in \ZZ/d\ZZ}\lambda_ku_k$ vanishes on the fiber $I^{(-1)}(A)$.
If the $\lambda_k$ are not all zero, then 
$f$ is non-zero, and its divisor is $I^{(-1)}(A)-I^{(-1)}(O')$.
 We deduce that $\sum_{k\in \ZZ/d\ZZ}
[B+kT]-[kT]$ is a principal divisor. Thus $\sum_{k\in \ZZ/d\ZZ}
(B+kT-kT)=dB=I'(A)=O$, the origin on $E$. So $A$ lies in the kernel of $I'$. \smallskip

Conversely,  if $A$ lies in the kernel of $I'$, then the divisor
$I^{(-1)}(A)-I^{(-1)}(O')$ is principal. Let $f$ be a non-zero function on $E$
such that $(f)=I^{(-1)}(A)-I^{(-1)}(O')$. Since $f$
lies in $\cL(I^{-1}(O'))$, there exists a non-zero vector 
$(\lambda_k)_{k\in \ZZ/d\ZZ}$  in $\bL^d$ such that 
$f=\sum_{k\in \ZZ/d\ZZ}\lambda_ku_k$.  But  $f$ vanishes
on the fiber $I^{(-1)}(A)$, by construction. So $(i)$ implies
$(iii)$.\medskip

To finally prove the first part of the lemma, it is now enough to take for $A$
the generic point of $E'/\bK$. The generic
point  is not in
the kernel of $I'$ and thus the
system $(u_k)_{k\in \ZZ/d\ZZ}$ is a $\bK(E')$-basis of $\bK(E)$.
\end{proof}

\subsubsection{The trace form}\label{subsubsection:dual}

Lemma~\ref{lemma:indep} above provides a   basis for the residue ring
at a fiber $I^{-1}(A)=[B]+\ldots+[B+(d-1)T]$ where
 $A \in E'(\bK)$. We need 
fast algorithms for multiplying two elements in this residue ring, given
by their coordinates in our  basis.
 A prerequisite is
to determine the coordinates of  $x(B)$ in the basis
$(u_k(B))_{k\in \ZZ/d\ZZ}$. 
More generally, we are interested in 
the coordinates of  $x$ in the basis
$(u_k)_{k\in \ZZ/d\ZZ}$ of the $\bK(E')$-vector space $\bK(E)$. 
The reason is that when multiplying $u_k$ and $u_l$ there appear
some translates of $x$.  See Eqs.~(\ref{eq:uABuAC}) and (\ref{eq:uAB2}).
We will give explicit expressions for these coordinates
and explain  how to compute them efficiently. 
We shall make use of the  trace form of $\bK(E)/\bK(E')$. Remind
this is a  non-degenerate quadratic form.
For $f$ a function on $E$, we denote by $\Tr (f)$ the sum
$\sum_{k\in \ZZ/d\ZZ}f\circ \tau_{kT}$. It  can be seen as a function 
on $E'$.
Our goal is to compute 
$\Tr(u_{O,kT})$, $\Tr(u_ku_l)$ and 
$\Tr(u_kx)$ as linear combinations of $1$, $x'$ and $y'$. We then deduce
an explicit formula for the determinant of the trace form.

\paragraph{Traces of $u_{O,kT}$}\label{paragraph:tru0kt}

For $1\leqslant k\leqslant d-1$, we set 
\begin{math}
c_k=\Tr(u_{O,kT})\,.
\end{math}
It is  proven in~\cite[Section 4.2]{CL} that
\begin{equation}\label{eq:cgot1}
c_1=  \Tr(u_{O,T}) = \sum_{1\leqslant l\leqslant d-2}\Gamma_{l,l+1} - a_1\,.
\end{equation}
Assume $k$, $l$ and $k+l$ are non-zero in $\ZZ/d\ZZ$,
then
\begin{math}
\Tr(u_{O,(k+l)T})=\Tr(u_{O,kT})+\Tr(u_{O,lT})-d\Gamma_{k,k+l}\,.
\end{math}
Thus,
\begin{equation}\label{eq:recucgot}
c_{k+l}=c_k+c_l-d\Gamma_{k,k+l}\,.
\end{equation}

This formula enables us to compute all the $c_k$ for $1\leqslant k\leqslant
d-1$, at the expense of $O(d)$ operations in $\bK$.  Indeed, we first compute
the coordinates $(x(kT),y(kT))$ for $1\leqslant k\leqslant d-1$.  Then, using
Eqs.~(\ref{eq:Gamma}) and (\ref{eq:deuz}), we compute $\Gamma_{k,k+1}$ for
every $1\leqslant k\leqslant d-2$. We then use Eq.~(\ref{eq:cgot1}) to compute
$c_1$. Finally, we use Eq.~(\ref{eq:recucgot}) repeatedly for $l=1$ and
$1\leqslant k\leqslant d-2$, and we deduce the values of $c_2$, \ldots,
$c_{d-1}$.

\paragraph{Traces of $u_{k}u_l$}

Assume first   that $k\not \in \{-1,0,1\}$, so
$O$, $T$, $kT$ and $(k+1)T$ are pairwise distinct. Then 
\begin{eqnarray*}
U_0U_k&=&
u_{O,T}(u_{O,(k+1)T}-u_{O,kT}+\Gamma_{k,k+1})\,,\\
&=&x+\Gamma_{1,k+1}u_{O,(k+1)T}-\Gamma_{1,k+1}u_{O,T}+x(T)+x((k+1)T)\\
&&-x-\Gamma_{1,k}u_{O,kT}+\Gamma_{1,k}u_{O,T}-x(T)-x(kT)+\Gamma_{k,k+1}u_{O,T}\,,\\
&=& \Gamma_{1,k+1}(u_{O,(k+1)T}-u_{O,T})-\Gamma_{1,k}(u_{O,kT}-u_{O,T})\\
&&+x((k+1)T)-x(kT)+\Gamma_{k,k+1}u_{O,T}\,.
\end{eqnarray*}
So 
\begin{equation}\label{eq:tr0k}
\Tr(U_0U_k)=\Gamma_{1,k+1}(c_{k+1}-c_1)-\Gamma_{1,k}(c_k-c_1)
+d(x((k+1)T)-x(kT))+\Gamma_{k,k+1}c_1\,.
\end{equation}
For $k=0$, we have  $U_0^2=x+x_T-a_1u_{0,T}+x(T)+a_2$. And thus
\begin{equation}\label{eq:tr00}
\Tr(U_0^2)= 2x'+d(x(T)+a_2)-a_1c_1+2\sum_{1\leqslant l\leqslant d-1}x(lT)\,.
\end{equation}
For $k=-1$, we have 
\begin{eqnarray*}
U_0U_{-1}&=&u_{O,T}u_{-T,O}=-u_{O,T}u_{O,-T}-a_1u_{O,T}\,,\\
&=&-(x+\Gamma_{1,-1}u_{O,-T}-\Gamma_{1,-1}u_{O,T}+a_2+x(T)+x(-T))\,,\\
&=&-x+\Gamma_{1,-1}(u_{-T,O}+a_1)+\Gamma_{1,-1}u_{O,T}-a_2-2x(T)\,.
\end{eqnarray*}
And thus
\begin{equation}\label{eq:tr0m1}
\Tr(U_0U_{-1})=-x'+2\Gamma_{1,-1}c_1+d(a_1\Gamma_{1,-1}-a_2)-2dx(T)-\sum_{1\leqslant l\leqslant d-1}x(lT)\,.
\end{equation}
Finally, for $k=1$, we  have
\begin{equation}\label{eq:tr01}
\Tr(U_0U_1)=\Tr(U_{-1}U_{0})=\Tr(U_0U_{-1})\,.
\end{equation}
\medskip 

Now, for any $k$ and $l$, we have 
\begin{equation}\label{eq:scale}
\Tr(u_ku_l)=\agot^2\Tr(U_kU_l)+\bgot^2d+2\agot\bgot c_1\,.
\end{equation}
We set 
\begin{equation}\label{eq:defek}
\egot_k=\Tr(u_0u_k)\,.
\end{equation}
 This is a polynomial in $x'$ with degree one  if $k\in \{-1,0,1\}$,  and zero
otherwise.
We denote by $\vegot$ the vector $(\egot_k)_{k\in \ZZ/d\ZZ}$.

Assume now  we are given
a non-zero point $A\in E'(\bK)$.   
For every $k$ in $\ZZ/d\ZZ$, we write 
\begin{equation}\label{eq:egot}
  e_k=\egot_k(A)\,.
\end{equation}
We can compute the vector $\ve=(e_k)_{k\in \ZZ/d\ZZ}$ at the expense of $O(d)$
operations in $\bK$. We first compute 
the coordinates $(x(kT),y(kT))$ for $1\leqslant k\leqslant d-1$, the coefficients 
$\Gamma_{k,k+1}$ for every $1\leqslant k\leqslant d-2$ and the $c_k$ for $1\leqslant k\leqslant
d-1$ as explained in Section~\ref{paragraph:tru0kt}. We then compute the
$\Gamma_{1,k}$ for $2\leqslant k\leqslant d-1$ 
using Eqs.~(\ref{eq:Gamma}) and    (\ref{eq:deuz}).
Then, we use Eqs.~(\ref{eq:tr0k}), (\ref{eq:tr00}), (\ref{eq:tr0m1}),
and
(\ref{eq:tr01}) to compute the values of the $\Tr(U_0U_k)$  at $A$.  Finally,
we use Eq.~(\ref{eq:scale}) to deduce $\ve$.

\paragraph{Traces of $xu_{k}$}\label{paragraph:trxuk}

For $k\not \in \{-1,0\}$, we have 
\begin{eqnarray*}
xU_k&=&x_Ou_{kT,(k+1)T}\,,\\
&=&\Gamma_{k,k+1}x+x((k+1)T)u_{O,(k+1)T}-x(kT)u_{O,kT}+\\
&&y((k+1)T)-y(kT) + a_1(x((k+1)T)-x(kT))\,.
\end{eqnarray*}
And thus,
\begin{eqnarray}\label{eq:iotak}
\Tr(xU_k)&=&\Gamma_{k,k+1}(x'+\sum_{1\leqslant l\leqslant d-1}
x(lT))+x((k+1)T)c_{k+1}-x(kT)c_k+\nonumber \\
&&d(y((k+1)T)-y(kT) + a_1(x((k+1)T)-x(kT)))\,.
\end{eqnarray}
For $k=0$, we have 
\begin{equation*}
xU_0=x_Ou_{O,T}=y+x(T)u_{O,T}+y(T)+a_1x(T)+a_3\,.
\end{equation*}
And thus,
\begin{equation}\label{eq:iota0}
\Tr(xU_0)=y'+x(T)c_1+d(y(T)+a_1x(T)+a_3)+\sum_{1\leqslant l\leqslant d-1}y(lT)\,.
\end{equation}
For $k=-1$, we have 
\begin{equation*}
xU_{-1}=x_Ou_{-T,O}=-y-a_1x+x(T)u_{-T,O}+y(T)+a_1x(T)\,.
\end{equation*}
And thus,
\begin{equation}\label{eq:iota-1}
\Tr(xU_{-1})=-y'-a_1x' + x(T)c_1 + d(y(T)+a_1x(T))-\sum_{1\leqslant l\leqslant d-1}(y(lT)+a_1x(lT))\,.
\end{equation}

We set 
\begin{displaymath}
\igot_k=\Tr(xu_k)=\agot \Tr(xU_k)+\bgot (x'+\sum_{1\leqslant l\leqslant d-1}
x(lT))\,.
\end{displaymath}
This is a polynomial in $x'$ and $y'$ with total  degree at most $1$.
The vector $\vigot =(\igot_k)_{k\in \ZZ/d\ZZ}$ is
the  coordinate vector  of $x$ in the dual basis
of $(u_k)_{k\in \ZZ/d\ZZ}$.
Remind we are interested in the coordinates of $x$
in the basis $(u_k)_{k\in \ZZ/d\ZZ}$ itself. Call
$\vhigot = (\higot_k)_{k\in \ZZ/d\ZZ}$ these coordinates. We have
\begin{equation}\label{eq:star}
\vigot  = \vegot \star \vhigot.
\end{equation}

Assume now  we are given
a non-zero point $A\in E'(\bK)$.   
For every $k$ in $\ZZ/d\ZZ$, we write 

\begin{equation*}
\iota_k=\igot_k(A) \text{ and } \hiota_k=\higot_k(A).
\end{equation*}

We can compute the vector
 $\viota = (\iota_k)_{k\in \ZZ/d\ZZ}$ at the expense of $O(d)$
operations in $\bK$. 
Then, using Eq.~(\ref{eq:star}), we can compute
the vector $\vhiota = (\hiota_k)_{k\in \ZZ/d\ZZ}$
at the expense of one division in the degree $d$ convolution algebra
over $\bK$.
This boils down to $d(\log d)^2\log \log d$ operations in $\bK$.

\paragraph{The trace form}

We now study the trace form in the basis $(u_k)_{k\in \ZZ/d\ZZ}$.

The matrix $\left(\Tr(u_ku_l)\right)_{k,l}=\left(\egot_{l-k}  \right)_{k,l}$ 
is circulant and its determinant
is 
\begin{equation}\label{eq:Dinit}
D=\left|\Tr(u_ku_l)\right|_{k,l}=\prod_{k\in \ZZ/d\ZZ}\ \sum_{l\in \ZZ/d\ZZ}\zeta^{kl}\egot_l
\end{equation}
where $\zeta$ is a primitive $d$-th root of unity (that is $\zeta^d=1$ and 
$\zeta^k-1$ is a unit for every $1\leqslant k\leqslant d-1$).

We compute
\begin{displaymath}
  \sum_{l\in \ZZ/d\ZZ}\egot_l=\sum_{l\in \ZZ/d\ZZ} \Tr(u_0u_l)=
  \Tr(u_0\sum_{l\in \ZZ/d\ZZ} u_l)=\Tr(u_0)=1\,.
\end{displaymath}
Using Eqs.~(\ref{eq:tr0k}), (\ref{eq:tr00}), (\ref{eq:tr0m1}) and (\ref{eq:tr01}), we deduce that $D$ is
a degree $\leqslant d-1$ polynomial in $x'$ and the  coefficient of $(x')^{d-1}$ is 
\begin{displaymath}
  \agot^{2d-2}\prod_{1\leqslant k\leqslant d-1}(2-\zeta^k-\zeta^{-k})=\agot^{2d-2} d^2\,.
\end{displaymath}
Since $\egot_k=\egot_{-k}$ for every $k\in \ZZ/d\ZZ$, we deduce from
Eq.~(\ref{eq:Dinit}) that $D$ is a square.

We now assume that $d$ and the characteristic of $\bK$ are coprime. So
the degree
of $D(x')$ is $d-1$.
From Lemma~\ref{lemma:indep}, we deduce
that the roots of $D$ are the abscissae of points in the kernel of the
dual isogeny $I' :  E'\rightarrow E$  and they all have multiplicity two.
Using Eq.~(\ref{eq:x'y'}), we deduce
\begin{equation}\label{eq:Dpsi}
\psi_I^{2d}(x)D(x')=\agot^{2d-2}\psi_d^2(x)\,,
\end{equation}
where
\begin{equation}\label{eq:psiI}
\psi_I(x)=\prod_{1\leqslant k\leqslant (d-1)/2}(x-x(kT))
\end{equation}
is the factor of $\psi_d(x)$ corresponding to 
points in the kernel of $I$.

\paragraph{Example}

We detail on a simple example how to construct
a ring of  elliptic periods.
Following~\cite{CL}, we consider the
elliptic curve $E$ of order $10$ defined by
\begin{displaymath}
  E/\FF_{7} : {y}^{2}+xy+5\,y={x}^{3}+3\,{x}^{2}+3\,x+2\,.
\end{displaymath}
The point $T=(3,1)$ generates a subgroup $T\subset E(\FF_{7})$ of order $d=5$. The quotient
elliptic curve $E'=E/T$  given by V\'elu's formulae has equation
\begin{displaymath}
E'/\FF_{7} :  {y}^{2}+xy+5\,y={x}^{3}+3\,{x}^{2}+4\,x+6\,,
\end{displaymath}
and the quotient isogeny is 
\begin{multline*}
  I: (x,y) \mapsto (x',y')=\left({\frac
      {{x}^{5}+2\,{x}^{2}+5\,x+6}{{x}^{4}+3\,{x}^{2}+4}}, \right.\\
  \left.{\frac { \left( {x}^{6}+4\,{x}^{4}+3\,{x}^{3}+6\,{x}^{2}+3\,x+4 \right) y+3\,{
          x}^{5}+{x}^{4}+{x}^{3}+3\,{x}^{2}+4\,x+1}{{x}^{6}+{x}^{4}+5\,{x}^{2}+6}}\right)\,.
\end{multline*}
\smallskip

We focus first  on $\Tr(u_{O,t})$.
We have
\begin{displaymath}
  (u_{O,kt})_{1\leqslant k\leqslant d-1} = \left({\frac {y+2}{x+4}},{\frac
    {y+2}{x+3}},{\frac {y}{x+3}},{\frac {y+6}{x+4}}\right)\,.
\end{displaymath}
A direct but heavy calculation yields
\begin{displaymath}
  c_1=
       {\frac {y+2}{x+4}}+
       {\frac {y+2\,{x}^{2}+5}{{x}^{2}+5}}+
       {\frac {5}{x+3 }}+
         {\frac {6\,yx+3\,y+2\,{x}^{3}+3\,x}{ \left( {x}^{2}+5
             \right) \left( x+4 \right) }}+
         {\frac {6\,y+6\,x+4}{x+4}}=3\,.
\end{displaymath}
Alternatively, if we first compute
\begin{math}
  \Gamma_{1,2} = 2\,,\ \Gamma_{2,3} = 0\,,\ \Gamma_{3,4} = 2\,,
\end{math}
we more easily come to $c_1=2+0+2-1 = 3$.
From Eq.~(\ref{eq:recucgot}), we deduce
\begin{math}
  c_2=3\,,\ 
  c_3=6\,,\ 
  c_4=6.\,
\end{math}
\smallskip

Let us now consider
$\Tr(U_0^2)$. A direct calculation yields
\begin{eqnarray*}
  \Tr(U_0^2) &=&
  {\frac {(y+2)^2}{(x+4)^2}}+
  {\frac {(y+2\,{x}^{2}+5)^2}{({x}^{2}+5)^2}}+
  {\frac {5^2}{(x+3)^2}}+\\
  &&\hspace*{2cm}
  {\frac {(6\,y(x+3)+2\,{x}^{3}+3\,x)^2}{ \left( {x}^{2}+5
      \right)^2 \left( x+4 \right)^2 }}+
  {\frac {(6\,y+6\,x+4)^2}{(x+4)^2}}\,, \\
  &=&{\frac {2\,{x}^{5}+6\,{x}^{4}+{x}^{2}+3\,x+1}{{x}^{4}+3\,{x}^{2}+4}}\,.
\end{eqnarray*}
But we can easily deduce from Eq.~(\ref{eq:tr00}) that this is equal to
\begin{displaymath}
  2\,x'+  5\,(3+3)-1\,.\,3+2\,(3+ 4+ 4+ 3)\,.
\end{displaymath}
\smallskip

If we now look more carefully at  $\Tr(x\,U_0)$, we have
\begin{eqnarray*}
  \Tr(x\,U_0) &=&
  x\,.\,{\frac {y+2}{x+4}}+
   {\frac {3\,y+3\,{x}^{2}+4\,x+2}{{x}^{2}+x+2}}\,.\,
  {\frac {y+2\,{x}^{2}+5}{{x}^{2}+5}}+\\
  &&{\frac {2\,y+4\,{x}^{2}+3\,x+5}{{x}^{2}+6\,x+2}}\,.\,{\frac {5}{x+3 }}+\\
  &&
  {\frac {5\,y(x+1)+4\,{x}^{3}+6\,{x}^{2}+5\,x+6}{ \left( {x}^{2}+6\,x+2
      \right)  \left( x+3 \right) }}\,.\,{\frac {6\,yx+3\,y+2\,{x}^{3}+3\,x}{ \left( {x}^{2}+5
      \right) \left( x+4 \right) }}+\\
  &&{\frac {4\,y+3\,{x}^{2}+x+1}{{x}^{2}+x+2}}\,.\,{\frac
    {6\,y+6\,x+4}{x+4}}\,,\\
  &=& {\frac { y\,\left( {x}^{6}+4\,{x}^{4}+3\,{x}^{3}+6\,{x}^{2}+3\,x+4
 \right)+2\,{x}^{6}+3\,{x}^{
5}+3\,{x}^{4}+{x}^{3}+6\,{x}^{2}+4\,x+6}{{x}^{6}+{x}^{4}+5\,{x}^{2}+6}
}\,.
\end{eqnarray*}
But, from Eq.~(\ref{eq:iota0}), we find that this is equal to
\begin{displaymath}
  y'+3\,.\,3+5\,(1+1\,.\,3+5)+(1+ 0+ 5+ 5)\,.
\end{displaymath}
\smallskip

Let us finally notice that since $c_1=3\neq 0$, we can take
$\agot=1/c_1=3$ and $\bgot=0$\, (see
Section~\ref{subsubsection:ENB}). Moreover, let now $A=(4,2)\in
E'(\FF_7)$. Take $B\in E(\bFF_7)$ such that $I(B)=A$. We set
$\tau=x(B)\in \bFF_7$ and check that $\tau$ is a root of the
irreducible $\FF_7$-polynomial
\begin{math}
  ({x}^{5}+2\,{x}^{2}+5\,x+6)-4\,({x}^{4}+3\,{x}^{2}+4) =
  {x}^{5}+3\,{x}^{4}+4\,{x}^{2}+5\,x+4\,.
\end{math}
We find that
\begin{displaymath}
  \ve = (0, 4, 0, 0, 4)\,.
\end{displaymath}

\subsection{Universal Weierstrass elliptic curves}
\label{subsection:weierstrass}

All identities stated in Section~\ref{subsection:ECF} still make
sense and hold true for an elliptic curve
over a  commutative ring under some mild  restrictions. Some (but not all)
of these identities are
proven in this general context in  V{\'e}lu's thesis~\cite{veluthese}
and Katz and Mazur's book~\cite[Chapter 2]{K}.  In this section, we 
give an elementary proof for all the required identities. 
We consider in Section~\ref{sec:ring-elliptic-functions} a sort of universal  ring for Weierstrass curves with
torsion. This ring being an integral  domain, the identities hold true in its
fraction field. There only  remains to check the integrality 
of all quantities involved. By inverting the determinant 
of Eq.~(\ref{eq:Dinit}), we define in Section~\ref{sec:ring-elliptic-basis} a localization of the universal ring
where the system $(u_k)_{k\in \ZZ/d\ZZ}$ remains a basis for the function ring
extension associated to the isogeny.

\subsubsection{Division polynomials}\label{sec:torsion_points}

Let $A_1$, $A_2$, $A_3$, $A_4$ and $A_6$ be indeterminates and set
$B_2=A_1^2+4A_2$, $B_4=2A_4+A_1A_3$, $B_6=A_3^2+4A_6$,
$B_8=A_1^2A_6+4A_2A_6-A_1A_3A_4+A_2A_3^2-A_4^2$,  and 
\begin{displaymath}
  \Delta=-B_2^2B_8-8B_4^3-27B_6^2+9B_2B_4B_6\,.
\end{displaymath}
 Set 
\begin{displaymath}
\cA_1=\ZZ[A_1,A_2,A_3,A_4,A_6,\frac{1}{\Delta}]\,.
\end{displaymath}
Let $x$ and $y$ be two more indeterminates.
Set 
\begin{equation*}
\Lambda(A_1,A_2,A_3,A_4,A_6,x,y)=y^2+A_1xy+A_3y-x^3-A_2x^2-A_4x-A_6\in \cA_1[x,y]\,.
\end{equation*}
Let  $E_{\text{aff}}$ be the affine smooth plane  curve over $\cA_1$ with equation
$\Lambda (A_1,A_2,A_3,A_4,A_6,x,y)=0$. Let $E$ be the projective scheme over
$\cA_1$ with equation   $Y^2Z+A_1XYZ+A_3YZ^2=X^3+A_2X^2Z+A_4XZ^2+A_6Z^3$.
We denote by $O$ the
section $[0,1,0]$. We have $E_{\text{aff}}=E-O$ and $E$ is an elliptic curve
over (the spectrum of) $\cA_1$ in the sense of~\cite{K}.

For every integer $k\geqslant 0$, we denote by $\psi_k(A_1,A_2,A_3,A_4,A_6,x,y)$ the 
functions in $\cA_1[x,y]/(\Lambda)$ defined  recursively
as in~\cite[Prop. 3.53]{EngeBook}:
\begin{eqnarray*}
&&\psi_0=0,\  \psi_1=1,\  \psi_2=2y+A_1x+A_3,\,\\
&&\psi_3 = 3x^4+B_2x^3+3B_4x^2+3B_6x+B_8\,,\\
&&\psi_4 = \psi_2 \left(
2x^6+B_2x^5+5B_4x^4+10B_6x^3+10B_8x^2+\right.\\
&&\ \ \ \ \ \ \ \ \ \ \ \ \ \ \left.(B_2B_8-B_4B_6)x+B_4B_8-B_6^6 \right)\,,\\
&&\psi_{2k} =  \frac{\psi_k}{\psi_2}\left( \psi_{k+2}\psi_{k-1}^2 -
\psi_{k-2}\psi_{k+1}^2 \right)\,,\\
&&\psi_{2k+1} =   \psi_{k+2}\psi_{k}^3 -
\psi_{k-1}\psi_{k+1}^3\,.
\end{eqnarray*}

These are in $\cA_1[x,y]/(\Lambda)$ but we can see them
as polynomials in $\cA_1[x,y]$ with degree $0$ or $1$
in $y$. If $k$ is odd, then  $\psi_k$ belongs to
$\cA_1[x]$ and, as a polynomial
in $x$,  we have $\psi_k=kx^{\frac{k^2-1}{2}} +O(x^{\frac{k^2-3}{2}})$.
If $k$ is even, then $\psi_k/\psi_2$ belongs
to $\cA_1[x]$.
The ring  $\cA_1[x,y]/(\Lambda)$ is an integral  domain.
Following~\cite[Prop. 3.52, Prop. 3.55]{EngeBook}, we define the following elements of its field of fractions:
\begin{eqnarray*}
g_k&=&x-\frac{\psi_{k+1}\psi_{k-1}}{\psi_k^2}\,,\\
h_k&=&y+\frac{\psi_{k+2}\psi_{k-1}^2}{\psi_2\psi_k^3} +\left( 3x^2+2A_2x+A_4-A_1y\right)\frac{\psi_{k-1}\psi_{k+1}}{\psi_2\psi_{k}^2}\,.
\end{eqnarray*}
The following important relation holds true:
\begin{equation}\label{eq:diff}
g_k-g_l=-\frac{\psi_{k+l}\psi_{k-l}}{\psi_k^2\psi_l^2}\ \text{ if }k>l\geqslant 1\,.
\end{equation}
We recall 
that 
 multiplication by $k$  on $E-E[k]$ is given by $(x,y)
\mapsto (g_k,h_k)$. Indeed, this is well known on the generic fiber of
$E$  and it extends to all $E$ by (Zariski) continuity.

\subsubsection{Universal V\'elu's isogenies}\label{sec:ring-elliptic-functions}

Let $d\geqslant 3$ be an odd integer and let ``$x(T)$'' and
 ``$y(T)$'' be two more
indeterminates. Let $\cS$
 be the multiplicative 
subset in $\cA_1[x(T),y(T)]$
generated by all $\psi_k(x(T),y(T))$ for $1\leqslant k\leqslant d-1$.
Let $\cA_d$ be the ring
\begin{displaymath}
  \cA_d=\cA_1[x(T),y(T),\frac{1}{\cS},\frac{1}{d}]/(\psi_d(x(T)),\Lambda(A_1,A_2,A_3,A_4,A_6,x(T),y(T)))\,.
\end{displaymath}

This is an \'etale algebra over
$\cA_1[{1}/{d},{1}/{\cS}]$. Since the later is a regular ring,
$\cA_d$ is regular too. This is also
an integral  domain.  Indeed, the $d$-torsion of the generic
Weierstrass curve is irreducible. We denote by $\cK_d$ the field of fractions
of $\cA_d$.
The point $T=(x(T),y(T))$ defines a section of $E_{\text{aff}}$ over
$\cA_d$. 
The curve $E$, base changed to $\cA_d$,
may be seen as  the universal Weierstrass elliptic curve with a
point of exact order $d$ over a ring  where $d$ is invertible.

For every integer $k$ such that $1\leqslant k\leqslant d-1$, the point
$kT$ defines a section of $E$ over $\cA_{d}$. We call
$x(kT)$ and $y(kT)$ its coordinates and we have
\begin{eqnarray*}
  x(kT)&=&g_k(A_1,A_2,A_3,A_4,A_6,x(T),y(T))\in \cA_d\,,\\
  y(kT)&=&h_k(A_1,A_2,A_3,A_4,A_6,x(T),y(T))\in \cA_d\,.
\end{eqnarray*}

We note that due to Eq.~(\ref{eq:diff}), the difference
$x(lT)-x(kT)$ is a {\it unit} in $\cA_d$ for any 
 $k$ and $l$  in $\ZZ/d\ZZ$ such that $k$, $l$, $k+l$
and $k-l$ are not zero.  
If we base change
$E$ to $\cK_d$, we obtain an elliptic curve over a field and we 
can introduce all the scalars and functions of 
Section~\ref{subsection:ECF}: the $\Gamma_{k,l}$, the $x_k$, $y_k$,
$U_k$, $x'$, $y'$, $w_4$, $w_6$,  $c_k$\ldots 
The denominators arising in the definition
of these scalars and functions are units
in 
\begin{displaymath}
  \cA_d\left[E-E[d]\right]= \cA_d[\frac{1}{\psi_d(x)},
  x,y]/(\Lambda(A_1,A_2,A_3,A_4,A_6,x,y))\,.
\end{displaymath}
So all these scalars (resp. functions) are in $\cA_d$ (resp.
$\cA_d\left[E-E[d]\right]$). Especially, we can now define the
isogenous curve $E'$ thanks to Eq.~(\ref{eq:coeff'}), then the
isogenies $I$ and $I'$.

There remains to choose $\agot$ and $\bgot$. 
We just take $\agot=1$
and $\bgot=(1-c_1)/{d}$.  Then the functions 
$u_k=\agot U_k+\bgot$ are in $\cA_d\left[E-E[d]\right]$.
All equations from Eq.~(\ref{eq:deuz}) to
Eq.~(\ref{eq:psiI}) still hold true because they are true
in $\cK_d(E)$ and 
$\cA_d\left[E-E[d]\right]$ embeds in 
the later field.

\subsubsection{A normal  basis}\label{sec:ring-elliptic-basis}

The open subset $E-E[d]$ is the spectrum of the ring $\cA_d\left[ E-E[d] \right]$.
This is an integral domain and a  regular ring (because it is smooth over
$\cA_d$). Therefore it is integrally
closed.
The open subset $E'-\Ker I'$ is the spectrum of the ring 
\begin{displaymath}
\cA_d\left[E'-\Ker I' \right]=\cA_d[\frac{1}{D(x')},
  x',y']/(\Lambda(A_1',A_2',A_3',A_4',A_6',x',y'))\,.  
\end{displaymath}
This is again an integral domain and a  regular ring (because it is smooth over
$\cA_d$). Therefore it is integrally
closed too.
Eqs.~(\ref{eq:xA}), (\ref{eq:x'y'}), (\ref{eq:Dpsi}) and (\ref{eq:psiI}) show that 
$\cA_d\left[E'-\Ker I'\right]$ is included in $\cA_d\left[ E-E[d]\right]$.
Eqs.~(\ref{eq:xA}) and  (\ref{eq:x'y'}) prove that $x$ and $y$ are
integral over $\cA_d\left[E'-\Ker I'\right]$. We deduce that the translates $(x_k)_{1\leqslant
  k\leqslant d-1}$ and
$(y_k)_{1\leqslant k\leqslant d-1}$
are integral over $\cA_d\left[E'- \Ker I'\right]$ too. Using Eq.~(\ref{eq:xAxmA}), we
deduce that the ${1}/(x-x(kT))$ are integral over $\cA_d\left[E'-\Ker I'\right]$.
Note that in the special
case $d=3$,  we also
need Eq.~(\ref{eq:resu23}).
Now Eqs.~(\ref{eq:Dpsi}) and (\ref{eq:psiI}) prove that
${1}/{\psi_d(x)}$ is integral over $\cA_d\left[ E'-\Ker I'\right]$.
Altogether   $\cA_d\left[ E-E[d]\right]$ is the  integral closure
 of $\cA_d\left[E'-\Ker I'\right]$ in $\cK_d(E)$.

Using Eqs.~(\ref{eq:uOkt}) and (\ref{eq:Uk}) and the fact that 
the ${1}/(x-x(kT))$ are integral over $\cA_d[E'-$ $\Ker I']$, 
we show  that the
$(u_k)_{k\in \ZZ/d\ZZ}$
are integral over $\cA_d\left[E'-\Ker I'\right]$,   therefore  
belong to $\cA_d\left[E-E[d]\right]$.
For
every function $f$ in $\cA_d\left[E-E[d]\right]$, the products
$fu_k$ are  integral over $\cA_d\left[E'-\Ker I'\right]$. Therefore their traces $\Tr(fu_k)$ 
 belong to $\cA_d\left[E'-\Ker I'\right]$, since  this ring  is integrally closed. Now remember that 
the determinant of the trace form is 
\begin{equation*}
D(x')=\left| \Tr(u_ku_l) \right|_{k,l}\,,
\end{equation*}
a unit in $\cA_d\left[E'-\Ker I'\right]$. We deduce that the coordinates of $f$ in the basis $(u_k)_{k\in \ZZ/d\ZZ}$
are in $\cA_d\left[E'-\Ker I'\right]$.  We thus have found a basis for the $\cA_d\left[E'-\Ker I'\right]$-module 
$\cA_d\left[E-E[d]\right]$. This finite free module of rank $d$
is also \'etale because the determinant $D(x')$
of the trace form is a unit.

Let $\sigma$ be the $\cA_d\left[E'-\Ker I' \right]$-automorphism of
$\cA_d\left[ E-E[d]\right]$
induced
 by  the translation $\tau_{-T}$. We have $\sigma(u_k)=u_{k+1}$ for every 
 $k\in \ZZ/d\ZZ$.


\begin{lemma}[A freeness result]\label{lemma:EE'}
The ring 
\begin{displaymath}
  \cA_d\left[E-E[d]\right]=\cA_d[\frac{1}{\psi_d(x)}, x,y]/(\Lambda(A_1,A_2,A_3,A_4,A_6,x,y))
\end{displaymath}
is a finite free \'etale   algebra
of rank $d$ over 
\begin{displaymath}
  \cA_d\left[E'-\Ker I'\right]=\cA_d[ \frac{1}{D(x')},
x',y']/(\Lambda(A_1',A_2',A_3',A_4',A_6',x',y'))
\end{displaymath}
and 
 $(u_{k})_{1\leqslant k\leqslant d-1}$ is a  basis
for this free algebra.  For every 
 $k\in \ZZ/d\ZZ$, we have $\sigma(u_k)=u_{k+1}$ where
 $\sigma$ is  the $\cA_d\left[E'-\Ker I' \right]$-automorphism of
$\cA_d\left[ E-E[d]\right]$
induced
 by  the translation $\tau_{-T}$. 
\end{lemma}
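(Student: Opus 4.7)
The plan is to leverage the field-theoretic statement of Lemma~\ref{lemma:indep} applied over the fraction field $\cK_d$ and then descend to the ring-level assertion by an integrality/trace-form argument that essentially compiles the observations already recorded in Section~\ref{sec:ring-elliptic-basis}. First I would note that $\cA_d$ is a regular ring (being \'etale over the localisation of $\cA_1$ at $\Delta$ and $d$), so the two smooth affine opens $E-E[d]$ and $E'-\Ker I'$ over $\cA_d$ have coordinate rings which are integral domains and regular, hence integrally closed.

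Next I would verify a short chain of integrality claims, each one anchored to an explicit Weierstrass identity in Section~\ref{subsection:ECF}. Eqs.~\eqref{eq:x'y'}, \eqref{eq:Dpsi} and \eqref{eq:psiI} give the inclusion $\cA_d[E'-\Ker I']\subset \cA_d[E-E[d]]$; Eqs.~\eqref{eq:xA} and \eqref{eq:x'y'} give integrality of $x,y$, hence of the translates $x_k,y_k$, over $\cA_d[E'-\Ker I']$; Eq.~\eqref{eq:xAxmA} (together with Eq.~\eqref{eq:resu23} in the edge case $d=3$) gives integrality of $1/(x-x(kT))$; and Eq.~\eqref{eq:Dpsi} combined with Eq.~\eqref{eq:psiI} gives integrality of $1/\psi_d(x)$. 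Assembling these, $\cA_d[E-E[d]]$ is precisely the integral closure of $\cA_d[E'-\Ker I']$ inside $\cK_d(E)$; in particular the $u_k$, built from $1/(x-x(kT))$ and $y$ via Eqs.~\eqref{eq:uOkt} and \eqref{eq:Uk}, belong to $\cA_d[E-E[d]]$.

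The basis and \'etaleness statements now fall out of the trace pairing. Lemma~\ref{lemma:indep} already gives that $(u_k)_{k\in\ZZ/d\ZZ}$ is a $\cK_d(E')$-basis of $\cK_d(E)$, so any $f\in \cA_d[E-E[d]]$ has unique coordinates $\lambda_k\in \cK_d(E')$ with $f=\sum_k\lambda_ku_k$, and Cramer's rule applied to the system $\Tr(fu_l)=\sum_k\lambda_k\Tr(u_ku_l)$ expresses each $\lambda_k$ as a polynomial in the traces $\Tr(fu_l)$ and $\Tr(u_ku_l)$ divided by the determinant $D(x')$. Because $fu_l$ and $u_ku_l$ are integral over the integrally closed ring $\cA_d[E'-\Ker I']$, their traces lie there, and because $D(x')$ is a unit in $\cA_d[E'-\Ker I']$, we get $\lambda_k\in \cA_d[E'-\Ker I']$. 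This simultaneously shows that $(u_k)_{k\in\ZZ/d\ZZ}$ is a free basis and, via the unit discriminant $D(x')$ of the trace pairing, that the finite free algebra $\cA_d[E-E[d]]/\cA_d[E'-\Ker I']$ is \'etale of rank $d$. The relation $\sigma(u_k)=u_{k+1}$ is then immediate from $u_k=\agot\,u_{kT,(k+1)T}+\bgot$ together with the tautological identity $u_{kT,(k+1)T}\circ\tau_{-T}=u_{(k+1)T,(k+2)T}$.

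The only real obstacle is the bookkeeping in the second step: one must check that every quantity introduced in Section~\ref{subsection:ECF} (the $\Gamma_{k,l}$, the $x_k$, $y_k$, $c_k$, $u_k$, $x'$, $y'$) lifts from the universal fraction field $\cK_d(E)$ to the universal ring $\cA_d[E-E[d]]$ and that each inverse appearing in their definitions ($1/(x-x(kT))$, $1/\psi_d(x)$, $1/D(x')$) is in fact integral over $\cA_d[E'-\Ker I']$. The slightly exceptional case $d=3$ of Eq.~\eqref{eq:xAxmA} is the only spot that requires invoking Eq.~\eqref{eq:resu23} separately.
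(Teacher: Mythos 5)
Your proposal follows the paper's own proof essentially line for line: the regularity/integral-closure observation for the two coordinate rings, the same chain of integrality claims anchored to Eqs.~\eqref{eq:xA}, \eqref{eq:x'y'}, \eqref{eq:xAxmA}, \eqref{eq:resu23}, \eqref{eq:Dpsi} and \eqref{eq:psiI}, the conclusion that $\cA_d[E-E[d]]$ is the integral closure of $\cA_d[E'-\Ker I']$ in $\cK_d(E)$, and the trace-form/Cramer's rule argument with $D(x')$ a unit to get freeness, the explicit basis, and \'etaleness. The only cosmetic difference is that you invoke Lemma~\ref{lemma:indep} by name to supply the generic-fiber basis, which the paper leaves implicit.
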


The following theorem is proven by  base change in
 Lemma~\ref{lemma:EE'}.
\begin{theorem}[Elliptic Kummer extension]\label{th:formules}
Let $d\geqslant 3$ be an odd integer.
Let $R$ be a ring where $d$ is invertible.
 Let  $a_1$, $a_2$, $a_3$, $a_4$, $a_6$, $\xgot$
and $\ygot$ be elements in $R$  such that
\begin{itemize}[itemsep=0.5pt,parsep=0.5pt,partopsep=0.5pt,topsep=0.5pt]
\item $\Delta(a_1,a_2,a_3,a_4,a_6)$ is a unit in $R$,
\item $\psi_d(a_1,a_2,a_3,a_4,a_6,\xgot,\ygot)=0$,
\item $\psi_k(a_1,a_2,a_3,a_4,a_6,\xgot,\ygot)$
is a unit in $R$ for any $1\leqslant k\leqslant d-1$.
\end{itemize}
Then $T=(\xgot,\ygot)$ is a point of  exact order $d$ on the 
Weierstrass elliptic
curve given by the equation
$y^2+a_1xy+a_3y=x^3+a_2x^2+a_4x+a_6$  over $R$. \medskip

Set $\agot = 1$
and $\bgot=(1-c_1)/{d}$
and $u_k=\agot U_k+\bgot$. 
Then all equations from Eq.~(\ref{eq:deuz}) to
Eq.~(\ref{eq:psiI}) still make sense and 
hold true in the ring 
\begin{displaymath}
  R\left[ E-E[d]\right]=R[\frac{1}{\psi_d(x)},x,y]/(\Lambda(a_1,a_2,a_3,a_4,a_6,x,y))
\end{displaymath}
and this ring  is a finite free \'etale
algebra 
of rank $d$ over 
\begin{displaymath}
  R\left[E'-\Ker I' \right]=R[\frac{1}{D(x')},x',y']/(\Lambda(a_1',a_2',a_3',a_4',a_6',x',y'))
\end{displaymath}
and
$(u_{l})_{1\leqslant l\leqslant d-1}$ is a  basis
for this free algebra.  \medskip

For every 
 $k\in \ZZ/d\ZZ$, we have $\sigma(u_k)=u_{k+1}$ where
 $\sigma$ is  the $R\left[E'-\Ker I' \right]$-automorphism of 
  $R\left[ E-E[d]\right]$  induced  by  the translation $\tau_{-T}$.

\end{theorem}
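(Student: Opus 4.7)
The plan is to deduce Theorem~\ref{th:formules} from Lemma~\ref{lemma:EE'} by base change along a suitable specialization map. Concretely, I would construct a ring homomorphism $\phi\colon\cA_d\to R$ sending $A_i\mapsto a_i$, $x(T)\mapsto\xgot$, $y(T)\mapsto\ygot$. The three hypotheses of the theorem correspond exactly to the three localizations and quotients used in Section~\ref{sec:ring-elliptic-functions} to build $\cA_d$: invertibility of $\Delta(a_1,\ldots,a_6)$ accommodates the localization at $1/\Delta$ used in the definition of $\cA_1$; invertibility of $d$ in $R$ takes care of the localization at $1/d$; invertibility of each $\psi_k(\xgot,\ygot)$ for $1\leqslant k\leqslant d-1$ allows inverting the multiplicative system $\cS$; and the vanishing of $\psi_d(\xgot,\ygot)$ together with the Weierstrass relation $\Lambda(a_i,\xgot,\ygot)=0$ (implicit in $T$ lying on the curve) ensures that the relations defining $\cA_d$ map to zero in $R$.

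Once $\phi$ is in hand, the two rings appearing in the statement are recognised as base changes of their universal counterparts. By construction, $R\otimes_{\cA_d}\cA_d[E-E[d]]$ is $R[1/\psi_d(x),x,y]/(\Lambda(a_i,x,y))$, and similarly $R\otimes_{\cA_d}\cA_d[E'-\Ker I']$ is $R[1/D(x'),x',y']/(\Lambda(a_i',x',y'))$. The latter identification uses that V\'elu's coefficients $a_i'$ and the trace discriminant $D(x')$ computed over $R$ are the images under $\phi$ of their universal counterparts, since they are given by the same polynomial expressions in the $A_i$, $x(kT)$, and $y(kT)$ that define $A_i'$ and the universal $D(x')$. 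All polynomial identities from~(\ref{eq:deuz}) through~(\ref{eq:psiI}) have already been established in $\cA_d[E-E[d]]$ in Section~\ref{sec:ring-elliptic-functions}, and hence transfer to $R[E-E[d]]$ via the induced ring homomorphism. Finite freeness, rank $d$, and \'etaleness over $R[E'-\Ker I']$ are all preserved under arbitrary base change of commutative rings, as is the basis property of the family $(u_l)$. Finally, the translation $\tau_{-T}$ is already defined over $\cA_d$, so the automorphism $\sigma$ of $\cA_d[E-E[d]]$ constructed just before Lemma~\ref{lemma:EE'}, together with the relation $\sigma(u_k)=u_{k+1}$, descends via $\phi$ to an automorphism of $R[E-E[d]]$ with the same property.

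The only nontrivial step is the first one: carefully matching each hypothesis of the theorem with the corresponding feature of the iterated construction of $\cA_d$, so as to obtain a well-defined specialization map $\phi$. The remainder of the argument is purely formal and consists in transporting all the conclusions of Lemma~\ref{lemma:EE'} along the flat (in fact, étale) base change induced by $\phi$.
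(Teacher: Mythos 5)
Your proposal is correct and is exactly the paper's approach: the paper disposes of this theorem with the single sentence ``The following theorem is proven by base change in Lemma~\ref{lemma:EE'},'' and you have simply unpacked that base change, correctly matching each hypothesis of the theorem to the corresponding localization or quotient in the construction of $\cA_d$ and noting that finite freeness, the basis property, \'etaleness, the polynomial identities, and the automorphism $\sigma$ are all stable under arbitrary base change. One small imprecision: you call the base change ``induced by $\phi$'' flat (even \'etale), but $\phi\colon\cA_d\to R$ need not be flat for a general $R$ such as $\ZZ/n\ZZ$; this does not affect the argument, since the properties you transport are preserved under arbitrary, not just flat, base change.
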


\subsection{Rings of elliptic periods}\label{subsection:ellp}

In this section, we give a recipe for constructing  an extension of a ring $R$
using  an isogeny between two elliptic curves over $R$.
The resulting ring will
be called a  {\it ring of elliptic periods}. It will be a finite
free \'etale algebra
over $R$. We just adapt the construction of~\cite[Section 4]{CL}
to the case where the base ring is no longer   a field.
So in this section,  $R$ is  a ring and $d\geqslant 3$ is an odd integer.
We assume that 
$d$ is invertible  in $R$ and  that we are given
 an elliptic curve $E$  over $R$  by  its 
Weierstrass equation 
$y^2+a_1xy+a_3y=x^3+a_2x^2+a_4x+a_6$  where 
$\Delta(a_1,a_2,a_3,a_4,a_6)$ is a unit in $R$. We also are given a $R$-point
$T=(\xgot,\ygot)$ on $E$ with exact order $d$. We call $I : E\rightarrow E'$
the corresponding isogeny, given by V{\'e}lu's formulae.
Let    $D(x')=\left| \egot_{l-k} \right|_{k,l}$ be the  polynomial
in $R[x']$ defined by Eqs.~(\ref{eq:Dinit}),  (\ref{eq:Dpsi}) and (\ref{eq:defek}).

We further assume that we are given a section
$A=(x'(A),y'(A))\in E'(R)$ of $E'_{\text{aff}}\rightarrow \Spec(R)$. 
We assume that $D(x'(A))$ is a unit in $R$. 
Geometrically, this means that the section
$A$ does not intersect the kernel of the dual
isogeny $I':E'\rightarrow I$.
This is equivalent to
the circulant matrix $\left( \egot_{l-k}(A) \right)_{k,l}$ being
invertible.  For every $k$ in $\ZZ/d\ZZ$, we write 
\begin{math}
e_k=\egot_k(A).
\end{math}
This is an element of $R$.
Saying that  the circulant matrix $\left( e_{l-k}\right)_{k,l}$ is
invertible means that the  vector $\ve=(e_k)_{k\in \ZZ/d\ZZ}$ is invertible
for the convolution product $\star$ on $R^d$.
We denote by $\veinv$ the inverse of $\ve$ for the convolution product.
The ideal $(x'-x'(A),y'-y'(A))$
of $R\left[E-E[d]\right]= R[x,y,{1}/{\psi_d(x)}]/(\Lambda(a_1,a_2,a_3,a_4,a_6,x,y))$ is denoted $\Fgot_A$.
We call 
\begin{displaymath}
  S=R[x,y,\frac{1}{\psi_d(x)}]/(\Lambda(a_1,a_2,a_3,a_4,a_6,x,y),\Fgot_A)\,,
\end{displaymath}
the residue ring of 
$I^{-1}(A)$.  We say that $S$ is a ring of elliptic periods.   
If we specialize at $A$ in  Theorem~\ref{th:formules}, we find that
 $S$ is a finite free \'etale   $R$-algebra  with basis $\Theta=
(\theta_{k})_{k \in \ZZ/d\ZZ}$
where  
\begin{equation*}
\theta_k=u_k\bmod \Fgot_A.
\end{equation*}
 We call
 $\sigma : S\rightarrow S$ be the $R$-automorphism induced
on $S$   by  the translation $\tau_{-T}$,
\begin{displaymath}
  \begin{array}{crcl}
    \sigma :& S & \longrightarrow  &   S\,,\\
    &f\bmod \Fgot_A& \longmapsto  &f\circ\tau_{-T}\bmod \Fgot_A\,.
  \end{array}
\end{displaymath}
It is clear that $\sigma(\theta_k)=\theta_{k+1}$ for all
$k\in \ZZ/d\ZZ$. So, if $\alpha=\sum_{k\in \ZZ/d\ZZ}\alpha_k\theta_k$
is an element of $S$ with coordinates $\valpha=(\alpha_k)_{k\in \ZZ/d\ZZ}
\in R^d$ in the basis $\Theta$, then the  coordinate vector
 of $\sigma(\alpha)$ is 
the cyclic shift
$\sigma(\valpha)=(\alpha_{k-1})_{k\in \ZZ/d\ZZ}$ of 
$\valpha$. We see that  the $R$-automorphism $\sigma : S\rightarrow S$ 
of the free $R$-algebra $S$ takes a very simple
form on the basis $\Theta$.

We call
$\cL\subset R(E-E[d])$ the $R$-module generated
by the $u_k$ for $k \in \ZZ/d\ZZ$. 
We know  that reduction modulo $\Fgot_A$ defines an isomorphism of $R$-modules:
\begin{displaymath}
  \begin{array}{crcl}
    \epsilon_A : & \cL & \longrightarrow  &   S\,,\\
    &f & \longmapsto  &f\bmod \Fgot_A\,.
  \end{array}
\end{displaymath}
So elements in $S$ can be represented by elements in $\cL$.

We now study the multiplication tensor in $S$.
We shall find a simple expression for this tensor
using
interpolation at some auxiliary points, in the spirit of  discrete  Fourier transform.
We  first notice   that if $k, l \in
\ZZ/d\ZZ$ and $k\not = l,l+1,l-1 \bmod d$, then
\begin{displaymath}\label{eq:ukul}
u_ku_l\in \cL.
\end{displaymath} 

This is proven using Eqs.~(\ref{eq:uABuAC}),   (\ref{eq:ukdef}), and
(\ref{eq:Uk}).
Using Eqs.~(\ref{eq:uABuAC}), (\ref{eq:uAB2}),   (\ref{eq:ukdef}), and
(\ref{eq:Uk}), we also show that 
\begin{displaymath}\label{eq:uk2}
u_{k-1}u_{k} +\agot^2 x_k \in \cL
\text{ and }
u_{k}^2-\agot^2x_k-\agot^2x_{k+1}\in \cL\,.
\end{displaymath}

So if $(\alpha_k)_{k \in \ZZ/d\ZZ}$
and $(\beta_k)_{k \in \ZZ/d\ZZ}$ are two vectors in $R^d$, we have
\begin{eqnarray}\label{eq:mulprinc}
(\sum_{k} \alpha_ku_{k})(\sum_{k}
\beta_ku_{k})&=&\agot^2
\sum_k\alpha _k\beta_k(x_k+x_{k+1})-
\agot^2\sum_k \alpha_{k-1}\beta_{k}x_{k} - 
\agot^2\sum_k
\beta_{k-1}\alpha_{k}x_{k}\bmod \cL \nonumber \\
&=&\agot^2 \sum_k(\alpha_{k}-\alpha_{k-1})(\beta_k-\beta_{k-1})x_k\bmod \cL.
\end{eqnarray}

We now  assume we are given  an auxiliary  section $M=(x(M),y(M))$
of $E_{\text{aff}}\rightarrow \Spec(R)$ such that the image
$N=I(M)$ of $M$ by $I$ is a section $(x'(N),y'(N))$ of 
$E'_{\text{aff}}\rightarrow \Spec(R)$ and $D(x'(N))$ is a unit
in $R$. So, the residue ring at $I^{-1}(N)$ is a free $R$-module 
of rank $d$ and the evaluation map
\begin{displaymath}
  \begin{array}{crcl}
    \epsilon_N : & \cL & \longrightarrow  &   R^d\,,\\
    &f & \longmapsto  &(f(M+kT))_{k \in \ZZ/d\ZZ}\,.
  \end{array}
\end{displaymath}
is a bijection. Also, the vector 
\begin{equation}
  \label{eq:vun}
\vuN=(u_0(M+kT))_{k\in \ZZ/d\ZZ}
\end{equation}
is invertible  for the convolution product in $R^d$. We call
$\vuNinv$ its inverse. 
We denote by $\vxN$ the vector 
\begin{equation}
  \label{eq:vxn}
  \vxN=\epsilon_N(x)=(x(M+kT))_{k\in \ZZ/d\ZZ}\,.
\end{equation}

We note 
\begin{equation*}
\xi_k=x_k\bmod \Fgot_A
\end{equation*}
 for every $k\in \ZZ/d\ZZ$.
Since $S$ is free over $R$ and $\Theta$ is  a basis for it,
there exist
 scalars 
 $(\hiota_k)_k$ in $R$ such that
\begin{equation*}
\xi_0=\sum_{k\in \ZZ/d\ZZ}\hiota_k\theta_k.
\end{equation*}

So $\hviota=(\hiota_k)_k$
is the  coordinate vector of $\xi_0$ in the basis $\Theta$.
In Section~\ref{paragraph:trxuk},
we already explained how to compute these  coordinates in
quasi-linear time  in the dimension $d$. 

Let $\alpha$, $\beta$ and $\gamma$ be three elements in
$S$ such that $\gamma=\alpha\beta$. Let $\valpha=(\alpha_k)_{k\in \ZZ/d\ZZ}$
be the coordinate vector of $\alpha$ in the basis $\Theta$. Define 
$\vbeta$ and $\vgamma$ in a similar way. 
To compute the multiplication tensor, we use  an argument similar
to the one of~\cite[Section 4.3]{CL}.
We define four functions in  $\cA_d\left[E-E[d]\right]$,
\begin{eqnarray*}
f_\alpha&=&\sum_i \alpha_i u_i\,,\ f_\beta = \sum_i \beta_iu_i\,,\\
\cQ &=& \agot^2 \sum_i (\alpha_i-\alpha_{i-1})(\beta_i-\beta_{i-1})x_i\,,\\
\cR&=&f_\alpha f_\beta-\cQ\,.
\end{eqnarray*}
The product 
we want to  compute is $f_\alpha f_\beta=\cQ+\cR\bmod \Fgot_A$.
From Eq.~(\ref{eq:mulprinc}), we deduce
that $\cR$ is in $\cL$.
From  the definition of $\hviota$, we deduce that the
coordinates in $\Theta$ of $\cQ\bmod \Fgot_A$
are given by the vector
\begin{displaymath}
  \hviota\star \left( \agot^2 (\valpha -\sigma(\valpha))\diamond
    (\vbeta-\sigma(\vbeta))\right)\,.
\end{displaymath}
The evaluation of $f_\alpha$ at the points $(M+kT)_k$ is the vector
$\epsilon_N(f_\alpha)=\vuN \star \valpha$. The evaluation
 of $\cR$ is
$\epsilon_N(\cR)=(\vuN\star \valpha)\diamond (\vuN\star \vbeta)-\vxN\star 
(\agot^2(\valpha -\sigma(\valpha))\diamond (\vbeta-\sigma(\vbeta)))$.
If we $\star$ multiply this last  vector on the left
by $\vuNinv$, we obtain the coordinates of $\cR$
in the basis $(u_0,\ldots,u_{d-1})$. These are the coordinates
of $\cR\bmod \Fgot_A$ in the basis $\Theta$ too.

So  the multiplication tensor in the $R$-basis
$\Theta$ of the free $R$-algebra 
$S$   is given by 
\begin{multline} \label{eq:tensor}
\vgamma=
(\agot^2\hviota)\star \left( (\valpha -\sigma(\valpha))\diamond (\vbeta-\sigma(\vbeta))
\right)+\\
\vuNinv \star \left((\vuN\star \valpha)\diamond (\vuN\star \vbeta)-(\agot^2\vxN)\star 
\left((\valpha -\sigma(\valpha))\diamond (\vbeta-\sigma(\vbeta))\right)\right)
\end{multline}
This multiplication tensor   consists of $5$ convolution
products,  $2$ component-wise products, $1$ addition and $3$
subtractions between vectors in $R^d$.\medskip

The following theorem summarizes the results in this section.
\begin{theorem}[The ring of elliptic periods]\label{th:tensor}
Let $d\geqslant 3$ be an odd integer.
Let $R$ be a ring where $d$ is invertible.
 Let  $a_1$, $a_2$, $a_3$, $a_4$, $a_6$, $\xgot$
and $\ygot$ be elements in $R$  such that
$\Delta(a_1,a_2,a_3,a_4,a_6)$ is a unit in $R$ and
the point $T=(\xgot,\ygot)$ is a point of  exact order $d$ on the 
Weierstrass elliptic
curve   over $R$ given by the equation
$y^2+a_1xy+a_3y=x^3+a_2x^2+a_4x+a_6$. 
Let $I : E\rightarrow E'$ be the V{\'e}lu's isogeny with kernel
$\langle T\rangle$ and let $A=(x'(A),y'(A))\in E'(R)$ be a section
of  $E'_{\text{aff}}\rightarrow \Spec(R)$ that does not
intersect the kernel of the dual
isogeny $I':E'\rightarrow I$ (equivalently  $D(x'(A))$ is a unit in $R$).
Let $\Fgot_A = (x'-x'(A),y'-y'(A))$ be the corresponding ideal 
of $R\left[E-E[d]\right]= R[x,y,{1}/{\psi_d(x)}]/(\Lambda(a_1,a_2,a_3,a_4,a_6,x,y))$.
Let 
\begin{displaymath}
  S=R[x,y,{1}/{\psi_d(x)}]/(\Lambda(a_1,a_2,a_3,a_4,a_6,x,y),\Fgot_A)\,,
\end{displaymath}
be the residue ring of 
$I^{-1}(A)$.  
Then  $S$ is a finite free \'etale   $R$-algebra of rank $d$.
If we  call
 $\sigma : S\rightarrow S$ the $R$-automorphism induced
on $S$,   by  the translation $\tau_{-T}$, then
$S$ is a free $R[\sigma]$-module  of rank $1$.

Using notations introduced 
from Eq.~(\ref{eq:deuz}) to
Eq.~(\ref{eq:psiI}), we set $\agot = 1$,
$\bgot=(1-c_1)/{d}$,
$u_k=\agot U_k+\bgot$ and $\theta_k=u_k\bmod \Fgot_A$. Then 
$\sigma(\theta_k)=\theta_{k+1}$, and 
$\Theta=
(\theta_{k})_{k \in \ZZ/d\ZZ}$ is an $R$-basis of $S$.
If  $M=(x(M),y(M))\in E(R)$ is an auxiliary  section that does not
cross $E[d]$, then the multiplication tensor of $S$ in the basis $\Theta$
is given by Eq.
(\ref{eq:tensor}).

\end{theorem}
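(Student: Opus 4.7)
The plan is to derive Theorem~\ref{th:tensor} as a specialization of Theorem~\ref{th:formules} along the section $A$, followed by an explicit computation of the multiplication tensor. First I would verify that the hypotheses of Theorem~\ref{th:formules} are in force: the discriminant $\Delta$ is a unit, $d$ is invertible, and $T$ has exact order $d$; by the standard identities the $\psi_k(T)$ for $1\le k\le d-1$ are then units in $R$. Theorem~\ref{th:formules} therefore makes $R[E-E[d]]$ a finite free \'etale algebra of rank $d$ over $R[E'-\Ker I']$ with basis $(u_k)_{k\in\ZZ/d\ZZ}$, and provides the equivariance $\sigma(u_k)=u_{k+1}$.

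Next I would perform the base change along the section $A$. The hypothesis that $D(x'(A))$ is a unit in $R$ is precisely what guarantees that the structure morphism $R[E'-\Ker I']\to R$, $x'\mapsto x'(A),\,y'\mapsto y'(A)$, is well defined, so that $S=R[E-E[d]]\otimes_{R[E'-\Ker I']}R$ obtained by quotienting by $\Fgot_A$ inherits from Theorem~\ref{th:formules} the structure of a finite free \'etale $R$-algebra of rank $d$ with basis $\Theta=(\theta_k)_{k\in\ZZ/d\ZZ}$. The translation $\tau_{-T}$ preserves the fiber $I^{-1}(A)$ (because $T$ lies in $\Ker I$), so $\sigma$ descends to an $R$-automorphism of $S$ with $\sigma(\theta_k)=\theta_{k+1}$; in particular $\Theta=(\sigma^k(\theta_0))_k$ is an $R$-basis, so $S$ is a free $R[\sigma]$-module of rank $1$ generated by $\theta_0$.

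It remains to derive formula~(\ref{eq:tensor}). Given $\alpha,\beta\in S$ with coordinate vectors $\valpha,\vbeta$ in $\Theta$, I lift them to $f_\alpha=\sum\alpha_i u_i$ and $f_\beta=\sum\beta_i u_i$ in $\cL$ and split $f_\alpha f_\beta=\cQ+\cR$ with $\cQ=\agot^2\sum_i(\alpha_i-\alpha_{i-1})(\beta_i-\beta_{i-1})x_i$. The key point, and the only non-bookkeeping step, is the containment $\cR\in\cL$: this follows from Eqs.~(\ref{eq:uABuAC}), (\ref{eq:uAB2}), (\ref{eq:ukdef}), (\ref{eq:Uk}), which yield $u_ku_l\in\cL$ whenever $k-l\not\equiv 0,\pm 1\pmod d$, and the two corrected formulas $u_{k-1}u_k+\agot^2 x_k\in\cL$, $u_k^2-\agot^2(x_k+x_{k+1})\in\cL$, combined in the same way as in Eq.~(\ref{eq:mulprinc}).

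Finally I would recover the coordinates of $\gamma=\alpha\beta=(\cQ+\cR)\bmod\Fgot_A$ in $\Theta$ as the sum of two contributions. For $\cQ\bmod\Fgot_A$, the coefficients of $x_k=\sigma^k(x_0)$ in the cyclic basis give the coordinate vector $\hviota\star\bigl(\agot^2(\valpha-\sigma(\valpha))\diamond(\vbeta-\sigma(\vbeta))\bigr)$, using that $\hviota$ represents $\xi_0$ in $\Theta$ (Section~\ref{paragraph:trxuk}). For $\cR\bmod\Fgot_A$, I exploit the auxiliary point $M$: since $D(x'(N))$ is a unit, $\epsilon_N:\cL\to R^d$ is an isomorphism and $\vuN$ is convolution-invertible. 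Evaluating $\cR=f_\alpha f_\beta-\cQ$ at the translates $M+kT$ gives $(\vuN\star\valpha)\diamond(\vuN\star\vbeta)-(\agot^2\vxN)\star\bigl((\valpha-\sigma(\valpha))\diamond(\vbeta-\sigma(\vbeta))\bigr)$, and convolving on the left by $\vuNinv$ returns the coordinates of $\cR$ in $(u_k)_k$, hence of $\cR\bmod\Fgot_A$ in $\Theta$. Summing the two contributions produces Eq.~(\ref{eq:tensor}).
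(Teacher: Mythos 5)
Your proposal is correct and follows essentially the same route as the paper: specialize Theorem~\ref{th:formules} along the section $A$ (the unit condition $D(x'(A))\in R^*$ is exactly what makes the evaluation map $R[E'-\Ker I']\to R$ and hence the base change well defined), then derive Eq.~(\ref{eq:tensor}) by splitting $f_\alpha f_\beta=\cQ+\cR$, reading off $\cQ\bmod\Fgot_A$ via $\hviota$, and inverting the circulant evaluation $\epsilon_N$ via $\vuNinv$ to recover $\cR$. This matches the derivation of Eq.~(\ref{eq:mulprinc}) and the surrounding discussion in Section~\ref{subsection:ellp}, so the argument is sound.
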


\subsection{Example}\label{subsection:example}

Let $R$ be the ring $\ZZ/101^2\ZZ$. We
 consider the elliptic curve $E$ over $R$
defined by the Weierstrass equation 
\begin{math}
  E/(\ZZ/101^2\ZZ) : {y}^{2}={x}^{3}+55\,x+91\,.
\end{math}
Let $T$ be the point $(659,8304)\in E/(\ZZ/101^2\ZZ)$. This is a point
with exact order  $d=7$.\smallskip

We first compute
\begin{math}
  \Gamma_{1,2} = 5780\,,\ \Gamma_{2,3} = 4390\,,\ \Gamma_{3,4} = 3596\,,\
  \Gamma_{4,5} = 4390\text{ and }\Gamma_{5,6} = 5780\,.
\end{math}
We then find $c_1=3534$, and
from Eq.~(\ref{eq:recucgot}), we deduce
\begin{math}
  c_2=7412\,,\ 
  c_3=618\,,\ 
  c_4=9583\,,\ 
  c_5=2789\text{ and } 
  c_6=6667\,.
\end{math}
Moreover $c_1$ is a unit in $R$ and we set
 $\agot=1/c_1=6665$ and $\bgot=0$.
\smallskip

We compute the quotient elliptic curve $E'=E/\langle T\rangle$ thanks to V\'elu's
formulae. This yields the curve
\begin{math}
  E'/(\ZZ/101^2\ZZ) : {y}^{2}={x}^{3}+6725\,x+6453\,.
\end{math}
Let  $A$ be the point $(1373,1956)\in E'(\ZZ/101^2\ZZ)$. This is
a point with exact  order 14.

We can efficiently compute traces of $u_{k}u_l$ evaluated at $A$ with
Eqs.~\eqref{eq:tr0k}, \eqref{eq:tr00}, \eqref{eq:tr0m1}, \eqref{eq:tr01} and
\eqref{eq:scale}. We find
\begin{displaymath}
  \ve = (9428,\,6046,\,1946,\,2596,\,2596,\,1946,\,6046)\,.
\end{displaymath}
This vector is invertible for the convolution product in $R^d$ and its
inverse is
\begin{displaymath}
  \veinv = (3392,\, 3344,\, 10161,\, 101,\, 101,\, 10161,\, 3344 )\,.
\end{displaymath}
We now  compute traces of $xu_{k}$ evaluated at $A$ with
Eqs.~\eqref{eq:iotak}, \eqref{eq:iota0}, \eqref{eq:iota-1}
and~\eqref{eq:star}, and find
\begin{displaymath}
  \viota  = ( 10063,\, 4509,\, 6660,\, 4259,\, 6660,\, 4509,\, 138)\,.
\end{displaymath}
We finally obtain
\begin{displaymath}
   \vhiota =  \veinv \star \viota = (7790,\, 6555,\, 2470,\, 2741,\, 4358,\, 2047,\, 636)\,.
\end{displaymath}
Let us consider the additional evaluation point $M=(8903,\, 4033)\in
E(\ZZ/101^2\ZZ)$. 
We check that $(\egot_k(N))_{k}$ where  $N=I(M)$ is invertible for the convolution
product in $R^d$. So $N$ does not cross the kernel of the dual
isogeny. Then Eq.~\eqref{eq:vxn} yields
\begin{displaymath}
   \agot^2 \vxN = (2742,\, 2044,\, 649,\, 2348,\, 7216,\, 9732,\, 7464)\,.
\end{displaymath}
Similarly, Eq.~\eqref{eq:vun} yields
\begin{displaymath}
  \vuN = (1029,\, 7201,\, 10176,\, 1807,\, 4875,\, 3261,\, 2255)\,.
\end{displaymath}
And therefore,
\begin{math}
  \vuNinv = (7790,\, 1761,\, 3889,\, 6998,\, 5866,\, 1090,\, 3210)\,.
\end{math}
\medskip

Now, let us make use of these precomputations to, for instance, compute
$\theta_0^2$ with Eq.~\eqref{eq:tensor}. We thus start from
\begin{math}
  \valpha = ( 1,\, 0,\, 0,\, 0,\, 0,\, 0,\, 0)\,,
\end{math}
and we first compute
\begin{displaymath}
  \vuN\star \valpha = (1029,\, 7201,\, 10176,\, 1807,\, 4875,\, 3261,\, 2255)\,,
\end{displaymath}
and
\begin{displaymath}
  \agot^2 \vxN \star \left((\valpha -\sigma(\valpha))\diamond (\valpha-\sigma(\valpha))\right)  =  ( 5,\, 4786,\, 2693,\, 2997,\, 9564,\, 6747,\, 6995 )\,.
\end{displaymath}
Thus,
\begin{multline*}
  \vuNinv \star \left((\vuN\star \valpha)\diamond (\vuN\star \vbeta)-(\agot^2\vxN)\star 
    \left((\valpha -\sigma(\valpha))\diamond (\vbeta-\sigma(\vbeta))\right)\right) =\\
  (8133,\, 8133,\, 8133,\, 8133,\, 8133,\, 8133,\, 8133)\,.
\end{multline*}
It follows,
\begin{displaymath}
  (\agot^2\hviota)\star \left( (\valpha -\sigma(\valpha))\diamond (\vbeta-\sigma(\vbeta))
\right)=
(6406,\, 4952,\, 8520,\, 969,\, 8109,\, 7516,\, 7834 )\,,
\end{displaymath}
and finally
\begin{displaymath}
  \vgamma = (4338,\, 2884,\, 6452,\, 9102,\, 6041,\, 5448,\, 5766)\,.
\end{displaymath}

\section{An elliptic AKS criterion}\label{section:AKSTEST}

Agrawal, Kayal and Saxena have proven~\cite{AKS} that primality
of an integer $n$  can be
tested in deterministic polynomial time $(\log n)^{\frac{21}{2}+o(1)}$. 
Their test, often called the AKS test, relies on explicit
computations  in the multiplicative group of a
well chosen free  commutative $R$-algebra $S$ of 
finite rank, where $R=\ZZ/n\ZZ$.
More precisely, they take for $S$ the cyclic algebra 
$R[x]/(x^r-1)$ where $r$ is a well chosen, and rather
large,  integer.

Lenstra 
and Pomerance generalized this algorithm and obtained the  better
deterministic complexity  $(\log n)^{6+o(1)}$~\cite{LP}. The main improvement
in Lenstra and Pomerance's approach consists in using 
a more general construction for  the free commutative
algebra $S$. As a consequence,
the dimension of $S$ is much smaller for a given $n$, and this results
in a faster algorithm.
A nice survey 
\cite{Schoof} has been written by Schoof.

Berrizbeitia first~\cite{Berri}, and  then  Cheng~\cite{Cheng},  
have  proven that there exists
a probabilistic variant of these algorithms that 
works in time $(\log n)^{4+o(1)}$ provided $n-1$ has a divisor
$d$ bigger than $(\log _2 n)^2$ and smaller than 
a constant times $(\log _2 n)^2$.  
Avanzi and  Mih\u{a}ilescu~\cite{AvM},
and independently Bernstein~\cite{B}, explain
how to treat a  general integer $n$ using a divisor $d$
 of $n^f-1$ instead, where $f$ is a small integer.
The initial idea consists in using
$R$-automorphisms of $S$  to speed up the 
calculations. In these variants, the free 
commutative $R$-algebra 
$S$ has to be constructed in such a way that a non-trivial
$R$-automorphism $\sigma : S\rightarrow S$ is effectively 
given, and 
can be efficiently applied to any element in $S$.

All the aforementioned  algorithms  construct 
$S$ as  a residue ring modulo $n$
of a cyclotomic or  Kummer extension of
the ring  $\ZZ$ of integers. 
In this section,  we propose an
AKS-like  primality criterion
that relies on Kummer theory of elliptic curves.
The  main advantage of this elliptic variant,
compared to  the Berrizbeitia-Cheng-Avanzi-Mihailescu-Bernstein one,
is
that it  allows  a much greater choice for the value of $d$, since there exist many elliptic curves
modulo $n$. We are not restricted to divisors of $n-1$. We can use
any  $d$ that divides the order of any elliptic curve modulo $n$. In 
particular, we  avoid the complication
and the cost coming from the exponent $f$ in $n^f-1$. 
The algorithm remains almost quartic both in time and space.
However, we heuristically  save
 a factor $(\log \log n)^{O(\log\log\log\log n)}$ in the 
complexity.  From a practical viewpoint, it might be worth
choosing for  $d$ a product of prime integers of the appropriate
size, depending of ones  implementation  of fast Fourier 
transform.

Section~\ref{subsection:cyclic} gathers  prerequisites from commutative algebra.
In Section~\ref{subsection:ringextensions}, we describe  a rather
general
variant
of the AKS  primality criterion: it makes uses  of a free
$R$-algebra $S$ of rank $d$ together with 
an $R$-automorphism  $\sigma : S\rightarrow S$ of order $d$.
We recall how this algebra
 can be constructed from multiplicative Kummer
theory as in~\cite{Berri}. 
In Section~\ref{subsection:critere}, we state and prove a primality criterion
involving   rings of elliptic periods.
 The construction of such rings  is detailed in Section~\ref{subsection:construction}.

\subsection{\'Etale cyclic extensions of a field}\label{subsection:cyclic}

Let  $\bK$ be a field and let $\bL\supset \bK$ be a commutative algebra over
$\bK$. We assume $\bL$ is of  finite dimension $d\geqslant 1$ over  $\bK$. We also assume
there exists a  $\bK$-automorphism  $\sigma$ of $\bL$ and 
a $\bK$-basis $(\omega_i)_{i\in \ZZ/d\ZZ}$ of 
$\bL$ such that  $\sigma(\omega_i)=\omega_{i+1}$. So
$\bL$ is a rank $1$ free   $\bK[\cG]$-module,  where $\cG=<\sigma>$ is the
cyclic group generated by  $\sigma$. And
$\omega_0$ is a basis of the  $\bK[\cG]$-module  $\bL$.
In this section,  we recall a few elementary facts about the arithmetic of $\bL$.

First, $\bL$ is a noetherian ring, because it is of finite type over the field
 $\bK$. Further  $\bK$ is the subring $\bL^\cG$ of elements in $\bL$ that are invariant
by $\sigma$. We deduce  \cite[Chapitre 5, paragraphe 1, num{\'e}ro 9, proposition 22]{BourbAC57} 
that   $\bL$ is integral over  $\bK$. Let  $\pgot$ be a prime ideal in $\bL$.
The intersection  $\pgot\cap \bK$ is a prime ideal in  $\bK$, so it is equal to
 $0$. Since  $0$ is maximal in $\bK$, the ideal  $\pgot$ is 
maximal in  $\bL$ 
\cite[Chapitre 5, paragraphe 2, num{\'e}ro 1, Proposition 1]{BourbAC57}.
Thus  $\bL$ is a ring of  dimension $0$. Since $\bL$ is noetherian,
 it is an artinian ring \cite[Chapitre 4, paragraphe 2, num{\'e}ro 5, Proposition 9]{BourbAC57}. Its nilradical $\Ngot$, which 
is equal to its Jacobson radical, is nilpotent. The automorphism
$\sigma$ acts  transitively on the set of prime ideals in $\bL$
\cite[Chapitre 5, paragraphe 2, num{\'e}ro 2, Th{\'e}or{\`e}me 2]{BourbAC57}. 
We denote by  $\cG^Z$ (resp. $\cG^T$) the decomposition  group  (resp. inertia group)
of all these prime ideals. Let  $e\geqslant 1$ be the order of the inertia group  $\cG^T$, and let  
 $f$ be the order of the  quotient $\cG^Z/\cG^T$.
We check that $d=efm$ where  $m$ is the number of   prime ideals in  
$\bL$. Let  $\pgot_0$, $\pgot_1$, \ldots, $\pgot_{m-1}$
be all these prime ideals. They are pairwise relatively prime. The radical of  $\bL$ is
\begin{displaymath}
\Ngot = \bigcap_{0\leqslant i\leqslant m-1}\pgot_i =\prod_{0\leqslant i\leqslant m-1}\pgot_i.
\end{displaymath}

The canonical map 
\begin{displaymath}
\phi : \bL\rightarrow \prod_{0\leqslant i\leqslant m-1}\bL/\pgot_i
\end{displaymath}
is a ring  epimorphism and its kernel is the radical $\Ngot$. 
For every $i$ in $\{0,1, \ldots, m-1\}$, the  quotient $\cG^Z/\cG^T$ is  isomorphic to the group of  $\bK$-automorphisms
of   the residue field $\bM_i=\bL/\pgot_i$  \cite[Chapitre 5, paragraphe 2, num{\'e}ro 2, Th{\'e}or{\`e}me 2]{BourbAC57}.  The field extensions $\bM_i$
of $\bK$
are normal and 
their separable degree   is $f$. 
Let $r$ be their inseparable degree. 
The dimension of the $\bK$-vector space $\bM_i$ is    $rf$. We deduce that
the dimension of  $ \prod_{0\leqslant i\leqslant m-1}\bL/\pgot_i$ is 
$rfm$. And the dimension of the radical  $\Ngot$ is 
\begin{equation}\label{eq:dimrad}
\dim_\bK(\Ngot)=d-rfm=(e-r)fm.
\end{equation}

The radical  $\Ngot$ is  nilpotent: there  exists an integer $k$ such that
 $\Ngot^k=0$. The artinian ring $\bL$ 
is isomorphic  \cite[Theorem 8.7]{AM}  to the product of local artinian rings
$\prod_{0\leqslant i\leqslant m-1} \bL/\pgot_i^k$.

One says that the algebra  $\bL$ is unramified over $\bK$ 
\cite[Chapter 4, Definition 3.17]{Liu}
if the residue fields  $\bL/\pgot_i$ are separable extensions of $\bK$ 
(that is   $r=1$)
 and the local factors  $\bL/\pgot_i^k$ are fields  (e.g. the nilradical
is zero or equivalently  $e-r=0$). This is equivalent to $\bL$  being \'etale
over $\bK$, e.g. the trace form being non-degenerate.

A sufficient condition  for $\bL$ to be unramified over $\bK$
is that 
for every prime divisor   $\ell$ of $d$ there exists an element $a_\ell$ in $\bL$ 
such that  $\sigma^{D/\ell}(a_\ell)-a_\ell$ is a unit. Indeed this proves that 
$\sigma^{D/\ell}$ does not lie in $\cG^T$.  So $e=1$. And
$r=1$ also,  using Eq.~\eqref{eq:dimrad}.

Assume now $\bK$ is a finite field and $\bL$ is reduced (therefore
\'etale over $\bK$).
Remember  $\pgot_0$, $\pgot_1$, \ldots, $\pgot_{m-1}$ are  the  prime ideals in  $\bL$. The Frobenius automorphism
$\Phi_i$ of $\bM_i=\bL/\pgot_i$ is the reduction modulo $\pgot_i $ of some power $\sigma^{z_i}$ of $\sigma$ lying in $\cG^Z$. Especially, for every $a$
in $\bL$, one has $\sigma^{z_0} (a)=a^p\bmod \pgot_0$ for some integer $z_0$. We let $\sigma$
act on the above congruence and deduce that $z_0=z_1=\cdots=z_{d-1}$ because $\sigma$ acts transitively on
the set of primes. So there exists an integer $z$ such that for every element $a$ in $\bL$ we have
\begin{equation}
  \label{eq:expp}
  a^p =\sigma^z(a)\,.
\end{equation}
Of course, $z$ is a multiple of $m$.

\subsection{Ring extensions and primality proving}\label{subsection:ringextensions}

Let $n\geqslant 2$ be an integer and set
$R=\ZZ/n\ZZ$. In this section, we state a general AKS-like primality criterion in terms
of the existence of some commutative free $R$-algebra $S$ of finite rank
fulfilling simple conditions.

Let $S\supset
R$ be a finite
free commutative $R$-algebra of rank $d\geqslant 1$. Then $R$ can be
identified with  a subring of $S$.   
Let $\sigma : S \rightarrow S$ be an
$R$-automorphism of $S$ and assume that there exists an $R$ basis $(\omega_i)_{i\in \ZZ/d\ZZ}$
of $S$ such that $\sigma (\omega_i)=\omega_{i+1}$. 
Let $p$ be a positive prime integer dividing $n$.  Set
$\bL=S/pS$ and $\bK=R/pR=\ZZ/p\ZZ$. Assume  $\bL$ is reduced.
This is always the case when $S$ is \'etale  over $R$
\cite[Chapter 4, Definition 3.17, Lemma 3.20]{Liu}.
 The $R$-automorphism 
$\sigma : S\rightarrow S$ induces a $\bK$-automorphism of $\bL$ that we call
$\sigma$ also.
Let $\theta$ be a unit in $S$ such that
\begin{equation*}
  \theta^n =\sigma(\theta)\,.
\end{equation*}
Reducing this identity modulo $p$ and setting $a=\theta\bmod p\in \bL$,  we obtain 
\begin{equation}
  \label{eq:expnmodp}
  a^n =\sigma(a)\,.
\end{equation}
Using Eqs.~(\ref{eq:expnmodp}) and~(\ref{eq:expp}) repeatedly, we prove that there exists an integer $z$ such that 
 for $k,\, l \in \NN$, we have
  \begin{equation}\label{eq:comb}
    a^{n^kp^l}=\sigma^{k+zl}(a).
  \end{equation}

Let $\pgot$ be a prime ideal in $\bL$ and set $\bM=\bL/\pgot$.
Set  $b= a\bmod \pgot\in \bM$.
Let $G\subset \bL^*$ be the group generated by $a$ and let $H\subset \bM$
be the group generated by $b$. 
We first show that the reduction modulo $\pgot$ map  $G\rightarrow H$ is a bijection.
Indeed, let $k$ be a positive integer such that $b^k=1 \in \bM$. Then $a^k=1\bmod \pgot$. 
We raise both members in this congruence to the $n$-th power.
 Using Eq.~(\ref{eq:expnmodp}),  we find 
$a^{kn}=a^{nk}=\sigma(a)^k=\sigma(a^k)=1\bmod \pgot$. So $a^k=1\bmod \sigma^{-1}(\pgot)$.
We remind that $\sigma$ acts transitively on the set of primes in $\bL$. So
$a^k$ is congruent to $1$ modulo all these primes. Since $\bL$ is reduced, we deduce
that  $a^k=1$.

The group $H$ is a subgroup of $\bM^*$. Therefore the order $h$
 of $H$ (which is the order
of $G$ also) divides $p^f-1$ where $f$ is the dimension of $\bM$ over $\bK$.
It is  thus  clear that $p$ and $\# H$ are coprime. Iterating $d$ times Eq.~(\ref{eq:expnmodp}), we find
that $a^{n^d}=a$. So $n$ also is invertible modulo $h=\#G=\#H$. So Eq.~(\ref{eq:comb}) makes
sense and holds true for $k$ and $l$ in $\ZZ$, provided the exponents are seen
as residues modulo $h$.

  We set $q=n/p$ and from Eqs.~\eqref{eq:expnmodp} and~\eqref{eq:expp}, we
  deduce that $a^q = \sigma^{1-z}(a)$ \,. Moreover, there exist four
  integers $i$, $i'$, $j$ and $j'$ in $\{0,1, \ldots, \lfloor \sqrt d \rfloor
  \}$ such that  $(i,j)\ne(i',j')$ and $i(1-z)+jz$ is congruent to
  $i'(1-z)+j'z$ modulo $d$.  Setting in Eq.~(\ref{eq:comb}), first $k=i$ and
  $l=j-i$, and then $k=i'$ and $l=j'-i'$, we find that exponentiations by
  $q^ip^{j}$ and $q^{i'}p^{j'}$ act similarly on $a$. We deduce that
  \begin{equation}\label{eq:congrug}
    q^ip^j =   q^{i'}p^{j'}\bmod \# G.
  \end{equation}
  We now observe that 
 both integers $q^ip^j$ and $q^{i'}p^{j'}$ are bounded
  above by $n^{\lfloor \sqrt d \rfloor }$. 
If 
\begin{equation*}
n^{\lfloor \sqrt d \rfloor }\leqslant \# G\,,
\end{equation*}
then 
  Congruence~(\ref{eq:congrug}) is an equality between integers and we deduce
  that $n$ is a power of $p$.

\begin{theorem}[AKS criterion]\label{th:AKSgen}
Let $n\geqslant 2$ be an integer and set $R=\ZZ/n\ZZ$.
 Let $S\supset R$ be a 
free 
algebra of rank 
 $d$ over $R$.  Let  $\sigma$ be an $R$-automorphism of
$S$. Let $\cG$ be the group generated by $\sigma$.
Assume  $S$ is a  free $R[\cG]$-module of rank 
$1$: there exists an element $\omega$ in $S$ such that 
$(\omega, \sigma(\omega), \ldots, \sigma^{d-1}(\omega))$
is an $R$ basis of $L$. Let $\theta$ be a unit in $S$ such that
$\theta^n=\sigma(\theta)$. Let 
$p$ be a prime divisor  of $n$.
Assume   $S/pS$ is reduced and 
$\theta \bmod p$
generates a subgroup of order 
 at least $n^{\lfloor \sqrt d \rfloor }$ in $(S/pS)^*$. 
Then $n$ is a  power of $p$.
\end{theorem}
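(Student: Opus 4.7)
The plan is to reduce modulo $p$ and then apply the structural results of Section~\ref{subsection:cyclic}. Setting $\bL = S/pS$, the reducedness hypothesis, combined with the observations that $\sigma$ descends to a $\bK = \FF_p$-automorphism of $\bL$ and that the reduction of $(\omega, \sigma(\omega), \ldots, \sigma^{d-1}(\omega))$ is a cyclic $\bK$-basis, places us exactly in the framework of that section. In particular $\sigma^d = \mathrm{id}$. Let $a = \theta \bmod p \in \bL^*$. The hypothesis $\theta^n = \sigma(\theta)$ reduces to $a^n = \sigma(a)$, while Eq.~(\ref{eq:expp}) furnishes an integer $z$ such that $b^p = \sigma^z(b)$ for every $b \in \bL$; in particular $a^p = \sigma^z(a)$. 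Writing $q = n/p$ and iterating these two relations gives
\begin{equation*}
a^{q^i p^j} = \sigma^{i(1-z) + jz}(a) \quad \text{for all } i, j \geq 0.
\end{equation*}

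Next I would invoke the classical AKS pigeonhole on the $(\lfloor \sqrt d \rfloor + 1)^2 > d$ pairs $(i, j) \in \{0, 1, \ldots, \lfloor \sqrt d \rfloor\}^2$. Two distinct pairs $(i,j) \neq (i',j')$ must yield the same residue $i(1-z) + jz \bmod d$, so that $\sigma^{i(1-z)+jz} = \sigma^{i'(1-z)+j'z}$, and therefore $a^{q^i p^j} = a^{q^{i'} p^{j'}}$. Writing $G = \langle a\rangle \subset \bL^*$, this is the congruence $q^i p^j \equiv q^{i'} p^{j'} \pmod{\#G}$.

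Finally, since both $q^i p^j$ and $q^{i'} p^{j'}$ are positive integers bounded by $n^{\lfloor \sqrt d\rfloor} \leq \#G$, the congruence promotes to an equality $q^i p^j = q^{i'} p^{j'}$ in $\ZZ$. With $(i,j) \neq (i',j')$, a brief case analysis on the signs of $i-i'$ and $j'-j$ (ruling out $q^a p^b = 1$ with positive exponents, and otherwise extracting an identity $q^{i-i'} = p^{j'-j}$) forces $q$ to be a power of $p$, whence $n = pq$ is itself a power of $p$.

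The main obstacle lies in the first step: one must invoke the general fact from Section~\ref{subsection:cyclic} that, in a reduced finite $\FF_p[\cG]$-algebra of the cyclic type considered here, the Frobenius is realized by a single uniform power $\sigma^z$ on all elements simultaneously. Once this exponent $z$ is available, the combinatorial endgame is the classical AKS pigeonhole, and the size hypothesis on $\langle \theta \bmod p\rangle$ in $(S/pS)^*$ is precisely what is needed to convert the congruence into an equality.
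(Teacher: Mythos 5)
Your strategy coincides with the paper's: reduce modulo $p$, invoke the structure theory of Section~\ref{subsection:cyclic} to produce a uniform exponent $z$ with $b^p = \sigma^z(b)$ for all $b \in \bL$ (Eq.~(\ref{eq:expp})), combine with $a^n = \sigma(a)$, and pigeonhole over $\ZZ/d\ZZ$; the promotion of the final congruence to an equality and the elementary number theory at the end also match the paper's argument.

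There is, however, a real gap at the step where you write that ``iterating these two relations gives $a^{q^i p^j} = \sigma^{i(1-z)+jz}(a)$.'' Plain iteration of $a^n = \sigma(a)$ and $a^p = \sigma^z(a)$ only yields $a^{n^k p^l} = \sigma^{k+zl}(a)$ for $k, l \geqslant 0$; setting $k = i$ and $l = j-i$ this produces your identity only when $j \geqslant i$. To cover all pairs $(i,j)$ with $0 \leqslant i,j \leqslant \lfloor\sqrt{d}\rfloor$ --- which is essential, since it is the replacement of $n$ by $q=n/p$ that keeps the exponent $q^ip^j$ bounded by $n^{\lfloor\sqrt{d}\rfloor}$, whereas $n^kp^l$ would not be --- you must first show that $p$ is coprime to the order of $a$ in $(S/pS)^*$, so that the $p$-th power map is injective on $G = \langle a\rangle$ and one may cancel $p$-th powers in $(a^q)^p = a^n = \sigma(a) = \bigl(\sigma^{1-z}(a)\bigr)^p$ to conclude $a^q = \sigma^{1-z}(a)$. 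The paper proves this coprimality by showing that reduction modulo any maximal ideal $\pgot$ of $\bL$ restricts to a bijection from $G$ onto its image, using the transitivity of $\sigma$ on the maximal ideals and the reducedness of $\bL$, whence $\#G$ divides $p^f-1$; it also records that $n$ is invertible modulo $\#G$ because $a^{n^d}=a$, so that Eq.~(\ref{eq:comb}) is meaningful for negative $l$. You should supply this lemma (or, equivalently, observe that a reduced finite $\FF_p$-algebra is a product of finite fields so $\#\bL^*$ is coprime to $p$): it is not a formal consequence of iterating the two displayed relations.
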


The condition that $S/pS$ is reduced is granted  if
$S$ is \'etale  over $R$.
A sufficient condition for $S$
to be \'etale  over $R$
 is that for every prime 
divisor  $\ell$ of $d$,  there exists an element $a_\ell$ in $S$ 
such that  $\sigma^{D/\ell}(a_\ell)-a_\ell$ is a unit. 

The condition on the size of the group generated by $\theta \bmod p$ is often
obtained with the help of geometric arguments. In our cases, these are degree
considerations, which yield a lower bound for $d$.  \bigskip

Berritzbeitia,   Cheng, Avanzi,   Mih\u{a}ilescu and
 Bernstein construct $S$ as $R[x]/(x^d-\alpha)$ where $d\geqslant 2$ divides $n-1$ and 
$\alpha$ is a unit in
$R$. 
 We set $n-1=dm$ 
and
$\zeta=\alpha^{m}$. Assume $\zeta$ has exact order $d$ in $R^*$. This means that
$\zeta^d=1$ and
$\zeta^k-1$ is a unit for every $1\leqslant k < d$. We define an $R$
automorphism $\sigma : S\rightarrow S$ by setting $\sigma(x)=\zeta x$.
We set $\omega=(\alpha-1)/(x-1)=1+x+x^2+\cdots+x^{d-1}\bmod x^d-\alpha$ and we check
that $(\omega, \sigma(\omega), \ldots, \sigma^{d-1}(\omega))$
is an $R$-basis of $S$. Indeed $(1,x,x^2, \ldots, x^{d-1})$ is a basis, and
the matrix connecting the two systems is a Vandermonde matrix
$V(1,\zeta, \ldots, \zeta^{d-1})$ which is invertible since $\zeta$ has exact order $d$. 
So $S$ is a free $R[\sigma]$-module of rank $1$. 

We note that $x\bmod x^d-\alpha$ is a unit in $S$
because $\alpha$ is a unit in $R$.
For every integer $1\leqslant k < d$, the difference 
$\sigma^k(x)-x=(\zeta^k-1)x$ is a unit in $S$, because $\zeta$ has exact order $d$.
So $S$ is \'etale  over $R$.
The main computational step in
Berrizbeitia test is to check, by explicit calculation, that the following
congruence holds true in $S$,
\begin{equation}\label{eq:berricong}
  (x-1)^n=\zeta x-1 \bmod (n,x^d-\alpha)\,. 
\end{equation}

So, we set $\theta=x-1\bmod (n,x^d-a)$. This is a unit in $S$ because $\alpha-1$ is a unit in $R$.
Letting $\sigma$ repeatedly act on Eq.~(\ref{eq:berricong}), we deduce that for any
positive integer $k$, the class $\zeta^k x-1\bmod (n,x^d-\alpha)$ is a power of $\theta$.

Let $p$ be any prime divisor of $n$.
We set $a=\theta\bmod p=x-1\bmod (p,x^d-\alpha) \in S/pS$.
We  show  that
the order of $a$
in $(S/pS)^*$ is large.
For every subset $\cS$ of $\{0,1,\ldots, d-1\}$, we denote by $a_\cS$ 
the product
\begin{displaymath}
\prod_{k\in \cS}(\zeta^kx-1) \bmod (p,x^d-a)=\prod_{k\in \cS}\sigma^k(a).
\end{displaymath}
This is a power of $a$, because every $\sigma^k(a)$ is.
Degree
considerations similar to those in the original paper~\cite{AKS} show that 
if $\cS_1$ and $\cS_2$ are two
strict distinct subsets of $\{0,1,\ldots, d-1\}$,   then $a_{\cS_1}$ and $a_{\cS_2}$ are distinct elements
in  $S/pS$.
So the
order of $a$ in $(S/p S)^*$ is at least $2^d-1$. This lower bound can be improved by
several means (see for instance Voloch's work~\cite{voloch}).

If  $2^d$ is bigger than $n^{\lfloor \sqrt d \rfloor }$, we deduce from
Theorem~\ref{th:AKSgen} that $n$ is a  prime power.

\begin{corollary}[Berrizbeitia criterion]\label{lemma:AKSber}
Let $n\geqslant 3$ be an integer and set $R=\ZZ/n\ZZ$.
Let $S=R[x]/(x^d-\alpha)$ where $d\geqslant 2$ divides $n-1$. 
 Set $n-1=dm$ and
 assume $\zeta=\alpha^{m}$  has exact order $d$ in $R^*$. 
Assume Eq.~(\ref{eq:berricong}) holds true in $S$.
If $2^d$ is bigger than $n^{\lfloor \sqrt d \rfloor }$, then $n$ is
a prime power.
\end{corollary}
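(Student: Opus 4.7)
My approach is to reduce the corollary to an application of the general criterion in Theorem~\ref{th:AKSgen}, taking the $R$-algebra to be $S=R[x]/(x^d-\alpha)$, the automorphism $\sigma:S\to S$ to be the one defined by $\sigma(x)=\zeta x$, and the distinguished unit to be $\theta=x-1$. The bulk of the work then consists in checking the hypotheses of that theorem; all verifications are already sketched in the discussion immediately preceding the corollary, and my task is to organize them in the right order.

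\textbf{Verifying the structural hypotheses.} First I would check that $S$ is a free $R[\sigma]$-module of rank $1$. With $\omega=1+x+x^2+\cdots+x^{d-1}$, the transition matrix from the basis $(1,x,\ldots,x^{d-1})$ to the system $(\omega,\sigma(\omega),\ldots,\sigma^{d-1}(\omega))$ is the Vandermonde matrix $V(1,\zeta,\ldots,\zeta^{d-1})$, and it is invertible because $\zeta$ has exact order $d$ so every $\zeta^i-\zeta^j$ with $i\neq j$ is a unit of $R$. Next I would check that $S/pS$ is reduced: it is enough to verify that $S$ is \'etale over $R$, which by the sufficient condition stated after Theorem~\ref{th:AKSgen} reduces to exhibiting, for each prime $\ell\mid d$, an element $a_\ell\in S$ with $\sigma^{d/\ell}(a_\ell)-a_\ell$ a unit. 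The element $a_\ell=x$ works, since $\sigma^{d/\ell}(x)-x=(\zeta^{d/\ell}-1)x$ is a product of two units of $S$ (the first factor because $\zeta$ has exact order $d$, the second because $x^d=\alpha$ is a unit of $R$).

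\textbf{Verifying the functional equation.} I would observe that $\theta=x-1$ is a unit of $S$: the product $\prod_{k=0}^{d-1}(\zeta^k x-1)$ collapses to $(-1)^d(1-\alpha)$ in $S$, and $\alpha-1$ is a unit of $R$ by hypothesis. The assumed congruence~(\ref{eq:berricong}) reads exactly $\theta^n=\zeta x-1=\sigma(\theta)$ in $S$, which is the functional equation required by Theorem~\ref{th:AKSgen}.

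\textbf{The main obstacle: the lower bound on the order.} The key step is to show that $a=\theta\bmod p$ generates a subgroup of $(S/pS)^*$ of order at least $n^{\lfloor\sqrt{d}\rfloor}$. For each subset $\cS\subset\{0,1,\ldots,d-1\}$ I would form $a_\cS=\prod_{k\in\cS}(\zeta^k x-1)\bmod(p,x^d-\alpha)$. Iterating the identity $a^n=\sigma(a)$ and using that $n$ is invertible modulo the order of $a$ (since iterating $d$ times gives $a^{n^d}=a$), each $\sigma^k(a)$ is a positive power of $a$, and hence so is each $a_\cS$. The hard part, and the heart of the Berrizbeitia argument taken over verbatim from the original AKS paper, is to show that distinct strict subsets $\cS_1\neq\cS_2$ of $\{0,\ldots,d-1\}$ give distinct elements $a_{\cS_1}\neq a_{\cS_2}$ of $S/pS$: picking representatives in $R[x]$ of degree $<d$, their degrees are $|\cS_1|$ and $|\cS_2|$, so when $|\cS_1|\neq|\cS_2|$ the two representatives differ, and within a fixed size one compares leading terms to separate them. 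This produces at least $2^d-1$ distinct powers of $a$ in $(S/pS)^*$. The hypothesis $2^d>n^{\lfloor\sqrt{d}\rfloor}$ then gives $2^d-1\geqslant n^{\lfloor\sqrt{d}\rfloor}$, so Theorem~\ref{th:AKSgen} applies and $n$ is a power of $p$, hence a prime power.
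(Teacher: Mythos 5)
Your proposal follows exactly the paper's route: specialize Theorem~\ref{th:AKSgen} to $S=R[x]/(x^d-\alpha)$ with $\sigma(x)=\zeta x$ and $\theta=x-1$, verify freeness of $S$ as an $R[\sigma]$-module via the Vandermonde matrix, verify \'etaleness via $\sigma^{d/\ell}(x)-x=(\zeta^{d/\ell}-1)x$, rewrite Eq.~(\ref{eq:berricong}) as $\theta^n=\sigma(\theta)$, and bound the order of $a=\theta\bmod p$ by counting the products $a_\cS$. Two details in your write-up deserve tightening, though neither changes the structure of the argument.

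First, ``$\alpha-1$ is a unit of $R$ by hypothesis'' is not literally a hypothesis: it follows because $\alpha-1$ divides $\zeta-1=\alpha^m-1$ in $R$, and $\zeta-1$ is a unit by the exact-order assumption; in a commutative ring, a factor of a unit is a unit. Second, and more substantively, your justification that distinct strict subsets $\cS_1\neq\cS_2$ of $\{0,\ldots,d-1\}$ with $|\cS_1|=|\cS_2|$ give $a_{\cS_1}\neq a_{\cS_2}$ by ``comparing leading terms'' does not work: the leading coefficient of $\prod_{k\in\cS}(\bar\zeta^k x-1)$ in $\Fp[x]$ is $\prod_{k\in\cS}\bar\zeta^k$, and two different equal-size subsets can easily produce the same value. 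The correct argument is unique factorization in $\Fp[x]$. The reduction $\bar\zeta=\zeta\bmod p$ still has exact order $d$ in $\Fp^*$ (since each $\zeta^k-1$, $1\leqslant k<d$, is a unit of $R$ and hence nonzero mod $p$), so the $d$ linear polynomials $\bar\zeta^k x-1$ are pairwise non-associate. Each $a_\cS$ is a polynomial of degree $|\cS|<d$, represented uniquely in $\Fp[x]/(x^d-\bar\alpha)$ by itself, and its factorization into linear factors recovers $\cS$. This is precisely the ``degree consideration'' the paper outsources to~\cite{AKS}; once it is stated this way, the rest of your proof goes through exactly as in the paper.
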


\medskip

In Section~\ref{subsection:critere}, we adapt this construction to the broader
general context of Kummer theory of elliptic curves. This way, we get rid of
the condition that $d$ divides $n-1$.

\subsection{A primality criterion}\label{subsection:critere}

In this section, we state and prove a primality criterion
involving elliptic periods.
Assume we are given an  integer $n\geqslant 2$. We
set $R=\ZZ/n\ZZ$ and we assume we 
are in the situation of Theorem~\ref{th:tensor}.
We are given  a Weierstrass elliptic curve  $E$ 
over $R$, a  positive  integer 
$d$ relatively prime to $2n$ and  a section 
$T\in E(R)$ of exact order $d$. 
The quotient by $\langle T\rangle$ 
isogeny $I : E\rightarrow E'$ is given by V{\'e}lu's formulae.
We are given a section $A\in E'_{\text{aff}}(R)$ and we call
\begin{equation*}
\Fgot_A=(x'-x'(A),y'-y'(A))
\end{equation*}
the ideal of $I^{-1}(A)$
in $R[x,y,{1}/{\psi_d(x)}]/(\Lambda(a_1,a_2,a_3,a_4,a_6,x,y))$.
We assume that  $D(x'(A))$ is a  unit in $R$, where $D$ is defined 
in Eqs.~(\ref{eq:Dinit}), (\ref{eq:Dpsi}) and (\ref{eq:psiI}).
Let  
\begin{equation*}
S=R[x,y,{1}/{\psi_d(x)}]/(x'-x'(A),y'-y'(A))
\end{equation*}
be the residue ring of 
 $R[x,y,{1}/{\psi_d(x)}]/(\Lambda(a_1,a_2,a_3,a_4,a_6,x,y))$ at $I^{-1}(A)$.

We call  $\sigma : S\rightarrow S$  the automorphism induced
on $S$   by  the translation $\tau_{-T}$: 
\begin{displaymath}
  \begin{array}{crcl}
    \sigma :& S & \longrightarrow  &   S\,,\\
    &f\bmod \Fgot_A& \longmapsto  &f\circ\tau_{-T}\bmod \Fgot_A\,.
  \end{array}
\end{displaymath}

For $k\in \ZZ/d\ZZ$, we set $\theta_k = u_k \bmod \Fgot_A$.
The $(\theta_k)_{k\in \ZZ/d\ZZ}$ form an $R$-basis of $S$  and 
we have $\sigma(\theta_k)=\theta_{k+1}$. The algebra $S$ is finite
free \'etale of rank $d$ over
$R$ because the determinant $D(x'(A))$ of the trace form is a unit.
The main computational step now
is to check, by explicit calculation, that the following
congruence holds true in $S$,
\begin{equation}\label{eq:ellcong}
  \theta_0^n=\theta_1\,. 
\end{equation}

Letting $\sigma$ repeatedly act on Eq.~(\ref{eq:ellcong}), 
we deduce that for any
$k\in \ZZ/d\ZZ$, $\theta_k$ is a power of $\theta_0$.
In particular, all  $\theta_k$ belong to  the ideal generated by 
$\theta_0$.  Using Eq.~(\ref{eq:sum1}), we deduce that
$1=\sum_{k\in \ZZ/d\ZZ}\theta_k$ belongs to 
the ideal generated by $\theta_0$. So $\theta_0$ is a unit.

Let $p$ be any prime divisor of $n$.
We set $a=\theta_0 \bmod p \in S/pS$.
We  show  that
the order of $a$
in $(S/pS)^*$ is large.
To every
 subset $\cS$ of $\ZZ/d\ZZ$, we  associate the product 
\begin{equation*}
u_\cS=\prod_{k\in \cS} u_k
\end{equation*}

We note  that $u_\cS\bmod (\Fgot_A,p)=\prod_{k\in \cS} (\theta_k\bmod p)$
is a power of $a$. 
Let  $\cS_1$ 
and $\cS_2$ be  two  subsets  of  
\begin{displaymath}
\{0,2,4, \ldots, d-3\}\subset
\ZZ/d\ZZ.
\end{displaymath}
Let $l_1$ and  $l_2$ be two integers that
are relatively prime to $p$.
Then $l_1u_{\cS_1}\not =l_2u_{\cS_2}\bmod (\Fgot_A,p)$ unless $\cS_1=\cS_2$
and $l_1=l_2\bmod p$. 
Indeed, if $l_1u_{\cS_1} =l_2u_{\cS_2}\bmod (\Fgot_A,p)$ then $l_1u_{\cS_1}-l_2u_{\cS_2}
\bmod p$ is a function on $E\bmod p$  with divisor 
$\geqslant -\sum_{k\in \ZZ/d\ZZ}[kt]$  
and it cancels
on the degree $d$ divisor $I^{-1}(A)\bmod p$.  So 
$l_1u_{\cS_1} =l_2u_{\cS_2}\bmod p$. Therefore
 these two functions have the same poles.
We  deduce first, that $\cS_1=\cS_2$, and then, that $l_1=l_2$.

There  are $2^{\frac{d-1}{2}}$ subsets of $\{0,2,4, \ldots, d-3\}$. 
So, the order of $a$ in $(S/pS)^*$ is at least  $2^{\frac{d-1}{2}}$.
\medskip

Using Theorem~\ref{th:AKSgen}, we deduce the following primality criterion.
\begin{corollary}[Elliptic AKS criterion]\label{th:ellaks}
Let $n\geqslant 2$  be an integer and let $E$
be a Weierstrass elliptic curve over $R=\ZZ/n\ZZ$.
Let $T\in E(R)$ be a section of exact order $d$ where
$d$ is an integer relatively prime to $2n$.
 Let $E'$
be the quotient $E/\langle T \rangle$ given by V{\'e}lu's formulae.
Let $A\in E'_{\text{aff}}(R)$ be a section such that the vector
$\ve = \left( \egot_{k}(A) \right)_{k}$ defined
by Eq.~(\ref{eq:egot}) is 
invertible
for the convolution product $\star$ on $R^d$. 

Assume that
\begin{equation}\label{eq:cong}
(\theta_0)^n=\theta_1
\end{equation}
holds true in the ring of elliptic periods $S=R[x,y,{1}/{\psi_d(x)}]/(x'-x'(A),
y'-y'(A))$.

Assume further that 
\begin{equation}\label{eq:boundd}
2^{\frac{d-1}{2}}\geqslant n^{\sqrt{d}}.
\end{equation}
Then $n$ is a prime power.
\end{corollary}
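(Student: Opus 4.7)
The plan is to reduce Corollary~\ref{th:ellaks} to the general criterion of Theorem~\ref{th:AKSgen} applied to $S$, $\sigma$, and $\theta:=\theta_0$. All structural hypotheses of that criterion are delivered by Theorem~\ref{th:tensor}: $S$ is a finite free \'etale $R$-algebra of rank $d$, the basis $\Theta=(\theta_k)_{k\in \ZZ/d\ZZ}$ satisfies $\sigma(\theta_k)=\theta_{k+1}$, so $\theta_0$ generates $S$ as a free $R[\sigma]$-module of rank one, and \'etaleness forces $S/pS$ to be reduced for every prime $p\mid n$. The identity $\theta_0^n=\sigma(\theta_0)$ required by Theorem~\ref{th:AKSgen} is exactly the hypothesis Eq.~(\ref{eq:cong}).

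Before invoking the criterion I would verify that $\theta_0$ is a unit in $S$. Iterating $\theta_0^n=\theta_1$ under $\sigma$ gives $\theta_k=\theta_0^{n^k}$ for every $k\in \ZZ/d\ZZ$, so all $\theta_k$ lie in the ideal $(\theta_0)$. Because the $u_k$ are normalized so that $\sum_k u_k=1$ by Eq.~(\ref{eq:sum1}), reduction modulo $\Fgot_A$ gives $1=\sum_k\theta_k\in (\theta_0)$, hence $\theta_0\in S^*$.

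The main obstacle is the lower bound on the order of $a:=\theta_0\bmod p$ in $(S/pS)^*$. I would exhibit $2^{(d-1)/2}$ distinct powers of $a$ as follows. Set $\cI=\{0,2,4,\ldots,d-3\}\subset \ZZ/d\ZZ$; since $d$ is odd, $|\cI|=(d-1)/2$. For any $\cS\subseteq \cI$, let $u_\cS=\prod_{k\in \cS}u_k$; its image in $S/pS$ is a product of elements $\theta_k\bmod p$, each a power of $a$, hence itself a power of $a$. The function $u_\cS$ has only simple poles, all lying in $\cP:=\sum_{k=0}^{d-2}[kT]$. If distinct subsets $\cS_1,\cS_2\subseteq \cI$ gave the same image in $S/pS$, then $u_{\cS_1}-u_{\cS_2}$, reduced modulo $p$, would be a function on $E\bmod p$ in the linear system $\cL(\cP)$ that vanishes on the degree-$d$ effective divisor $I^{-1}(A)\bmod p$; since $\deg\cP=d-1<d$, such a function must be zero, and a comparison of pole supports (which distinguish distinct subsets of $\cI$ because $\cI$ has no two consecutive elements) then forces $\cS_1=\cS_2$. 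This produces $2^{(d-1)/2}$ distinct powers of $a$, so by hypothesis~(\ref{eq:boundd}) the order of $a$ is at least $2^{(d-1)/2}\geqslant n^{\sqrt d}\geqslant n^{\lfloor \sqrt d\rfloor}$, and Theorem~\ref{th:AKSgen} concludes that $n$ is a prime power.
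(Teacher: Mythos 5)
Your proposal is correct and follows the paper's own proof essentially step for step: invoke Theorem~\ref{th:AKSgen}, establish that $\theta_0$ is a unit via $\sum_k\theta_k=1$, and bound the order of $a$ from below by counting the $2^{(d-1)/2}$ products $u_\cS$ over subsets $\cS\subseteq\{0,2,\ldots,d-3\}$, distinguished by a degree argument. If anything, your version of the degree argument is slightly tighter than the paper's (you bound the pole divisor by $\cP=\sum_{k=0}^{d-2}[kT]$ of degree $d-1<d$ directly, where the paper writes a degree-$d$ bound and leaves the endgame implicit), but it is the same idea.
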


We recall  that the condition that the vector 
$\ve$ be invertible means that the section $A$ does not cross
the kernel of the dual isogeny $I' :E'\rightarrow E$.
Checking Eq.~(\ref{eq:cong}) requires $O(\log n)$ multiplications 
in the ring $S$. Any such multiplication
requires $O(d\log d\log\log d)$ operations (additions,
subtractions, multiplications)  in $R=\ZZ/n\ZZ$. 
So the total cost is 
\begin{displaymath}
O((\log n)^2(\log \log n)^{1+o(1)} \times d\log d\log\log d)
\end{displaymath}
elementary operations using fast arithmetic~\cite{Schon,SchonStras}.
In Section~\ref{subsection:construction}, we explain why one can hope to find
a degree $d$ that is $O((\log n)^2)$. With such a $d$, one can
verify Eq.~(\ref{eq:cong}) in time
\begin{displaymath}
O((\log n)^4(\log \log n)^{2+o(1)}).
\end{displaymath}
Moreover, we explain how
to construct the ring $S$ in Corollary~\ref{th:ellaks}.

\subsection{Construction of a ring of elliptic periods}\label{subsection:construction}

In this section, we explain how to  
construct the  ring of elliptic periods that is required
 to prove that a given integer
$n\geqslant 2$ is prime using   Corollary~\ref{th:ellaks}.
So, we are given  an  integer
 $n\geqslant 2$ which is probably prime: it already passed many
pseudo-primality tests.
We want to construct a ring of elliptic
periods modulo $n$ with  rank  $d$ for some $d$ satisfying
Inequality~(\ref{eq:boundd}). A sufficient condition is that $d\geqslant \dmin$ with
\begin{displaymath}
  \dmin =\lceil  4(\log_2 n)^2+2\rceil\,.
\end{displaymath}

We assume that $d$ is odd too.
We like $d$ to be as small as possible.
We set $\dmax=\dmin \times O(1)$ and ask that $d\in [\dmin, \dmax]$.
The  construction  is probabilistic  and 
relies on several heuristics.
Since $n$ is probably prime, we shall allow ourselves 
to use  algorithms that are only proven to work under the condition
that $n$ is prime.  This is not an issue  as far as we can  check
the result rigorously (and efficiently).

We set $R=\ZZ/n\ZZ$.
We want to construct an elliptic curve $E$
 over $R$ with
a section $T\in E(R)$ of exact order $d$ in the sense of
\cite[Chapter 1, 1.4]{K}. 
We use complex multiplication theory.
\bigskip

\noindent
\textit{The first step of the algorithm  selects quadratic imaginary orders.}
We look over the maximal quadratic imaginary orders $\cO$ for decreasing
fundamental discriminants $-\Delta$. We start with $-\Delta=-7$.
For each  order $\cO$, we first look for a square root $\delta$
of $-\Delta$ modulo
$n$ using the algorithm of Legendre. Since $n$
is expected to be prime, the algorithm will succeed in probabilistic 
time $(\log n)^2(\log\log n)^{1+o(1)}$. And of course we can check
the result rigorously in time
$(\log n)$ $(\log\log n)^{1+o(1)}$.
For a given $n$, such a square root
exists for one quadratic order over two. If we fail to find such a
square root, we go to the next quadratic order. 
\smallskip

Once we have found  a square root $\delta$ of $-\Delta$ modulo $n$, we call
$\ngot$ the ideal $( n,\sqrt{-\Delta}-\delta )$ in $\cO$ and we 
look for
an element  with norm $n$ in $\ngot$. We use fast Cornachia's
algorithm. It runs in deterministic time $(\log n)(\log \log n)^{2+o(1)}$ 
and finds such an element  $\phi\in \cO$ when  it exists. 
\smallskip

We then set $\tgot =\Tr(\phi)$ and look for an integer $d$ that satisfies the
following conditions:
\begin{itemize}[itemsep=0.5pt,parsep=0.5pt,partopsep=0.5pt,topsep=0.5pt]
\item $d \in [\dmin, \dmax]$,
\item $d$ is relatively prime to  $n(n-1)(n+1)$,
\item there exists an $\epsilon \in \{1,-1\}$ such that $d$ divides
  $n+1-\epsilon\tgot$ and is relatively prime to $(n+1-\epsilon\tgot )/d$.
\end{itemize}
\smallskip

In order to  find such a $d$, we apply the elliptic curve factoring
method to $n+1-\tgot$ and $n+1+\tgot$. Since the factors we are looking
for are very small, we expect to find them in time $(\log n)^{1+o(1)}$.
If we find  no such integer $d$, we go to the next fundamental discriminant 
$-\Delta$.

We expect to succeed in finding
an integer $d$ for some $\Delta=(\log \log n)^{2+o(1)}$.
Also the expected running time of this first step
is $(\log n)^{2+o(1)}$.
%
%
We note that the search for split discriminants can be accelerated using the
same technique as in the J.O. Shallit  fast-ECPP algorithm
\cite{hand,morainfastecpp}.\medskip
\bigskip

\noindent
\textit{The second step of the algorithm  constructs the ring $S$ from the  couple $(-\Delta,d)$.}
Once we have found a  quadratic order   $\cO$,  we compute
the associated  Hilbert class polynomial. Computing 
$H_\cO(X)$ requires  quasi-linear time in the size 
of this polynomial. This polynomial has degree 
$\Delta^{1/2+o(1)}$ and height $\Delta^{1/2+o(1)}$,  where $-\Delta$
is the discriminant of $\cO$. So $H_\cO(X)$ 
can be computed  in time $\Delta^{1+o(1)}$.
Finding a root $j$ of $H_\cO(X)$ modulo $n$  is achieved in
probabilistic time 
\begin{displaymath}
\Delta^{1/2+o(1)} (\log n)^{2+o(1)}\,.
\end{displaymath}
So the  time for finding this root will be
$(\log n)^{2+o(1)}$.
\smallskip

Once computed a root of the modular
polynomial, we construct an elliptic curve $E$ over $R=\ZZ/n\ZZ$
having  modular invariant $j$. 
We then construct a random  $R$-section $P$
on $E$. 
We expect one and only one among $[n+1-\tgot]P$ and $[n+1+\tgot]P$ 
to be  equal to the zero section $O$. If this is not the case, we pick
another point $P$.
Let $\epsilon \in \{-1,1\}$ be such 
that $d$ divides $n+1-\epsilon \tgot$.
If we have found a  section  $P$ such that 
$[n+1-\epsilon \tgot]P\not =O$, then we replace
$E$ by its quadratic twist. And we start again
with this new curve.
If we have found a point $P$ such that 
$[n+1-\epsilon \tgot]P=O$ and $[n+1+\epsilon \tgot]P\not =O$, then we multiply 
$P$ by $(n+1-\tgot)/d$ and obtain a section $T$ that, we hope,
has exact order $d$. 
We can  test that $T$ has exact order $d$  by checking 
that $\psi_k(x(T))$ is a unit in $R$ for every strict divisor $k$ of
$d$.
If this condition does not hold, we pick another section $P$ on $E$.
\smallskip

Once we have found a $T$ of exact order $d$, we consider  the
quotient isogeny $I : E\rightarrow E'$. We
compute the coefficients in the Weierstrass equation of $E'$
thanks to Eq.~(\ref{eq:coeff'}). We do not write down explicit 
equations for $I$. 
We look for a $R$-section  $A$ on $E'$ having exact order $d$.
We let $S$ be the residue
ring of  $I^{-1}(A)$. Elements in $S$ are represented
by vectors in $R^d$. The automorphism $\sigma$ is the  cyclic shift 
of coordinates. There remains to describe the multiplication 
law. To this end, we pick  an auxiliary 
$R$-section $M$  of $E$ such that $N=I(M)$ does not cross 
the kernel of the dual isogeny $I'$; or equivalently  $D(x'(N))$
is a unit in $R$.
We now can  compute the multiplication tensor of the  ring $S$.
This tensor is given by Theorem~\ref{th:tensor}. We just need 
to compute the vectors 
$\hviota$, $\vuN$, $\vxN$ using the method given
in Section~\ref{subsubsection:dual}.  
This requires $O(d(\log d)^2\log \log d)$ operations in $R$.
This finishes the construction of the ring $S$.
\smallskip

The expected running time of this second step is 
$  (\log n)^{1+o(1)}(\log n+d^{1+o(1)})=(\log n)^{3+o(1)}$
operations in $R$.
\ifsubmitted\else
\bigskip

\noindent
\textit{Remark.} To improve memory requirements of the algorithm, we may try
to replace the degree $O(\log^2 n)$ extension $S$ by a direct product of
$O(\log n)$ extensions $S_k$, each of degree $d_k=O(\log n)$ and each endowed
with an $R$-automorphism $\sigma_k$ of order $d_k$. Unfortunately, this
product is endowed with an $R$-automorphism, $\prod_k \sigma_k$, of order
$\prod_k d_k$, much larger than the rank $\sum_k d_k$ and this is a serious
drawback to get an efficient primality criterion.
\fi

\subsection{Example}

We consider here a primality test for $n=1009$.\medskip

We first notice that $\dmin = \lceil 4(\log_2 n)^2+2\rceil = 401$, and a quick
search among maximal quadratic imaginary orders $\cO$ for decreasing
fundamental discriminants yields $d=479$ for $-\Delta = -148$ (and class
number 2).  In truth, we have
\begin{math}
  52^2+3^2\,148 = 4\,n\,,
\end{math}
and the corresponding elliptic curve has got $n+1-52$ ($= 2\times
479$) points.

The Hilbert class polynomial associated to $-\Delta = -148$ is
\begin{displaymath}
  H_{-148}(X) = X^2 - 39660183801072000\,X - 7898242515936467904000000\,.
\end{displaymath}
One of its roots mod $n$ is $j_E = 353$, and one can check that the point
$T=(296, 432)$ is of order $d$ on the elliptic curve
\begin{displaymath}
  E~: {y}^{2}+xy={x}^{3}+364\,x+907.
\end{displaymath}
Similarly, we can check that the point $M=(726, 695)$ is of order 958. V\'elu's formulae yield
 then the quotient elliptic curve,
\begin{displaymath}
  E / \langle T\rangle~: {y}^{2}+xy={x}^{3}+130\,x+233\,.
\end{displaymath}
We choose $A = (383, 201)$, a point of order $d$ on $E / \langle T\rangle$. We can check
also that the image of $M$ by the isogeny is equal to $N=(321, 344)$, a point
of order 2.\medskip

With this setting, we can now define, without any ambiguity, a normal elliptic
basis $\Theta= (\theta_{k})_{k \in \ZZ/d\ZZ}$ (see
Section~\ref{subsection:ellp}) and a final computation yields
\begin{displaymath}
  \theta_0^{1009} = \theta_{91}\,. 
\end{displaymath}

We check that $91$ is relatively prime to $479$. So $T'=91T$ is a point
of exact order $479$. Applying Corollary~\ref{th:ellaks} with
$T'$ instead of $T$, we prove that  $1009$ is a prime.


\ifsubmitted\else

\section{A stronger criterion}\label{section:strong}

We now improve on the primality criterion of Section~\ref{section:AKSTEST},
at the expense of some more  geometry and combinatorics. 
If we come back to the proof of Corollary~\ref{th:ellaks}, we find
ourselves with an elliptic curve $E$ over a field $\bK=\Fp$. We are
given a point $T$ of odd order $d\geqslant 3$ and
the corresponding  automorphism $\sigma$ of the field
of functions,
\begin{displaymath}
  \begin{array}{crcl}
    \sigma :& \bK(E) & \longrightarrow  &   \bK(E)\,,\\
    &f & \longmapsto  &f\circ\tau_{-T}\,.
  \end{array}
\end{displaymath}

We also are given a function $u_0$ on $E$. We have an isogeny $I :
E\rightarrow E'$, a 
divisor $\Ker I = [O]+[T]+[2T]+\dots+[(d-1)T] $  and
the associated  $\bK$-linear space $\cL(\Ker I)$
of dimension $d$ lying  inside $\bK(E)$. We consider the $\ZZ[\sigma]$-module $\cU$ generated by $u_0$ inside $\bK(E)^*$. The essential
point is that the intersection $\cU\cap \cL(\Ker I)$ is  large: the quotient
$(\cU\cap \cL(\Ker I))/\bK^*$ has cardinality at least $2^{\frac{d-1}{2}}$. All
the functions in this intersection have degree $\leqslant d$.
We want to  replace $u_0$ by a slightly different function and 
obtain an even larger set of functions with small degree in  the corresponding
monogenous 
$\ZZ[\sigma]$-module.\medskip

This section is organized as follows. Section~\ref{subsection:units}
studies  the structure of the $\ZZ$-module $\cU=\bK[E-\langle T\rangle]^*$
of units in
$\bK[E-\langle T\rangle]$. We show that the quotient module $\cU/\bK^*$
 is monogenous
as a $\ZZ[\sigma]$-module and we exhibit a generator for it.
Section~\ref{subsection:inter} gives a lower bound for the
number of functions with degree $\leqslant (d-1)/2$ in  $\cU/\bK^*$.
The resulting strengthened primality criterion (Corollary~\ref{cor:AKSS})
is stated in  Section~\ref{subsection:strong}. 
It is asymptotically twenty five  times faster than the test
resulting from Corollary~\ref{th:ellaks}. \medskip

We postpone to Appendix~\ref{section:appendix} some of the technical results
needed in the proof of the stronger primality criterion. The determinant
needed in Section~\ref{subsection:units} is calculated in
Section~\ref{subsection:det}. Section~\ref{subsection:binom} gives a simple
lower bound for binomial coefficients that is useful to prove in
Section~\ref{subsection:enum} a combinatorial lemma.

\subsection{A group of elliptic units}\label{subsection:units}

Let $\bK$ be a field  and $E$ an elliptic curve over $\bK$.
Let $d\geqslant 3$ be an odd integer and let $T$ be a point of order
$d$ in $E(\bK)$. 
Let $\sigma : \bK(E) \rightarrow \bK(E)$
be the automorphism that sends $f$ to $f\circ \tau_{-T}$.
In this paragraph, we are interested in the group $\cU$
of functions in $\bK(E)$ having no zeros nor poles outside
the group $\langle T\rangle$ generated by $T$.

There is a unique multiple $\hat T$ of $T$
such that $T=2\HT$.
For every $k$ in $\ZZ/d\ZZ$, we define $u_k$
as in Section~\ref{subsubsection:ENB}. This is a function
having two simple poles: one at $kT$ and one at
$(k+1)T$. 
If $l=2k\bmod d$, we set $\hu_{l}=u_k-u_0(\HT)=u_k-u_k(kT+\HT)=\hu_0\circ \tau_{-l\HT}$.
Its divisor is
\begin{displaymath}
(\hu_{l})=-[kT]+2[\HT+kT]-[(k+1)T]= -[l\HT]+2[(l+1)\HT]-[(l+2)\HT]
\end{displaymath}
and it is clear that 
\begin{equation}\label{eq:produk}
\prod_{k\in \ZZ/d\ZZ}\hu_k\in \bK^*\,.
\end{equation}

 We
want to prove that the $\hu_k$ generate the lattice
$\cU/\bK^*$, or equivalently that $(\hu_k)_{0\leqslant k\leqslant d-2}$ is a 
$\ZZ$-basis
for it.
Let $\cV$ be the submodule  of $\ZZ^d$ consisting of vectors
$(e_k)_k$ such that $\sum_{k\in \ZZ/d\ZZ}e_k=0$.
Let $\cW$ be the sublattice  of $\cV$ consisting of vectors
$(e_k)_k$ such that $\sum_{k\in \ZZ/d\ZZ}e_k=0$ 
and $\sum_{k\in \ZZ/d\ZZ}ke_k =0\bmod d$. 
The index of $\cW$ in $\cV$ is $d$.
We  construct a bijection
\begin{equation}\label{eq:V}
V : \cU/\bK^* \rightarrow \cW
\end{equation}
by associating to every unit $u$
the vector consisting of its valuations at all points
$k\HT$ for $k\in \ZZ/d\ZZ$.
In order to prove that $(\hu_k)_{0\leqslant k\leqslant d-2}$ is a 
$\ZZ$-basis
for $\cU/\bK^*$, we consider  the following $(d-1)\times d$
matrix,
\begin{equation}\label{eq:vandermonde}
\left( \begin{array}{ccccccc}
-1&2&-1&0&\cdots&0&0\\
0&-1&2&-1&\cdots&0&0\\
0&0&-1&\ddots&\ddots&\vdots&\vdots\\
\vdots&\vdots&\vdots&\ddots&2&-1&0\\
0&0&0&\cdots&-1&2&-1\\
-1&0&0&\cdots&0&-1&2
\end{array} \right)
\end{equation}

We stress that 
the $d-1$  lines in this matrix are the images $V(\hu_k)$ 
of the $\hu_k$ by $V$, for $0\leqslant k\leqslant d-2$. We want to show
that these lines
 form a basis
of $\cW$. We call them $W_k$ for $0\leqslant k\leqslant d-2$.
From equation~(\ref{eq:det}) below,  we deduce that the determinant
of the rightmost $(d-1)\times (d-1)$ minor in
the above matrix is $d$. So the index of the lattice generated by
the $(W_k)_{0\leqslant k\leqslant d-2}$ inside $\cV$ is a divisor of $d$. 
This implies that  this lattice is equal to $\cW$.

\begin{lemma}
Let $\cU \subset \bK(E)^*$  be the group of functions having
no zero nor pole outside the subgroup $\langle T\rangle$ generated by $T$.
Then $\cU/\bK^*$ is a free $\ZZ$-module and $(\hu_k)_{0\leqslant k \leqslant d-2 }$
is a basis for it. As a $\ZZ[\sigma]$-module, $\cU/\bK^*$
is monogenous and $\hu_0$ is a generator for it.
\end{lemma}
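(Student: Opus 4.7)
The plan is to prove the lemma in two stages: first establish the free $\ZZ$-module structure by making the valuation map of Eq.~\eqref{eq:V} into an explicit isomorphism, then derive the $\ZZ[\sigma]$-monogenicity from how translation by $T$ acts on the indexing of the $\hu_l$.

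First I would make the map $V$ precise by sending $f \in \cU$ to the vector $V(f) = (v_{k\HT}(f))_{k \in \ZZ/d\ZZ}$ of valuations at the $d$-torsion points of $\langle \HT\rangle$. It lands in $\cW$ because the divisor of a function has degree $0$ (the coordinates sum to $0$) and, by the standard Abel--Jacobi criterion on an elliptic curve, sums to $O$ under the group law; since $d$ is odd and $T=2\HT$, the point $\HT$ has exact order $d$, so this second condition is exactly $\sum_k k\,v_{k\HT}(f) \equiv 0 \pmod d$. The kernel of $V$ is $\bK^*$ (constants are the only functions with empty divisor), giving injectivity. Surjectivity is the converse direction of Abel--Jacobi: all the $k\HT$ are $\bK$-rational since $T\in E(\bK)$, so every vector of $\cW$ is the divisor of some function in $\cU$.

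Next I would show that $(V(\hu_k))_{0\leqslant k\leqslant d-2}$ is a $\ZZ$-basis of $\cW$. From the computed divisor $(\hu_l) = -[l\HT]+2[(l+1)\HT]-[(l+2)\HT]$, each $V(\hu_l)$ has entries $-1,2,-1$ at positions $l,l+1,l+2 \pmod d$ and $0$ elsewhere, and one checks directly that it lies in $\cW$. For $0\leqslant k\leqslant d-2$ these are exactly the rows of the matrix in Eq.~\eqref{eq:vandermonde}. Identifying $\cV$ with $\ZZ^{d-1}$ by dropping the $0$-th coordinate, the sublattice $\Lambda$ spanned by these rows has index equal to the absolute value of the determinant of the rightmost $(d-1)\times(d-1)$ minor, which is a tridiagonal matrix with $2$ on the diagonal and $-1$ on the off-diagonals; the appendix (Eq.~\eqref{eq:det}) gives this determinant as $d$. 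Since $\Lambda \subset \cW$ and $[\cV:\cW]=d=[\cV:\Lambda]$, the containment is an equality, so $(\hu_k)_{0\leqslant k\leqslant d-2}$ is indeed a $\ZZ$-basis of $\cU/\bK^*$.

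Finally, for the $\ZZ[\sigma]$-module structure, I would observe that $\sigma$ is translation by $-T=-2\HT$, so that $\sigma(\hu_l) = \hu_0 \circ \tau_{-(l+2)\HT} = \hu_{l+2}$: on indices, $\sigma$ acts by a shift of $2$. Because $d$ is odd, multiplication by $2$ is a bijection of $\ZZ/d\ZZ$, hence $\{\sigma^k(\hu_0)\}_{k\in \ZZ/d\ZZ} = \{\hu_l\}_{l\in \ZZ/d\ZZ}$; combined with the $\ZZ$-basis statement this gives monogenicity with generator $\hu_0$. The main subtlety is the Abel--Jacobi surjectivity in the first paragraph, but since all torsion points involved are $\bK$-rational no passage to an extension is needed.
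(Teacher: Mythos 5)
Your proof is correct and follows the same route as the paper's: identify $\cU/\bK^*$ with the lattice $\cW$ via the valuation map $V$ at the points $k\HT$, show the rows $V(\hu_k)$ for $0\leqslant k\leqslant d-2$ span a sublattice of $\cV$ of index $d$ by the tridiagonal determinant of Eq.~\eqref{eq:det}, and conclude equality with $\cW$ since $[\cV:\cW]=d$. Your additions --- the Abel--Jacobi justification that $V$ lands in and onto $\cW$, and the observation that $\sigma$ acts on the index $l$ by a shift of $2$ so that oddness of $d$ makes $\{\sigma^k(\hu_0)\}$ exhaust all $\hu_l$ --- are exactly the details the paper leaves implicit, and they are right.
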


\subsection{Elliptic units with small degree}\label{subsection:inter}

In this paragraph, we are interested  in the subset $\cT$ of $\cU$
consisting of functions in $\cU$ having degree $\leqslant (d-1)/2$. 
Recall the definitions of $\cV$ and  $\cW$ given in
Section~\ref{subsection:units}.
Let
$\cI$ be the subset of the lattice $\cV$ consisting of vectors
having $L^1$-norm $\leqslant d-1$.
Let $\cJ$ be the intersection of $\cI$ and $\cW$.
The set $\cT /\bK^*$ is mapped  bijectively
onto  $\cJ$ by the map  $V$ defined in Eq.~(\ref{eq:V}).  
We want to bound from below the cardinality of $\cJ$.

For every  $k$ and $l$ in $\ZZ/d\ZZ$, 
the map $\kappa_{k,l} : \cV\rightarrow  \cV$  is defined to
be the map
that increments the $k$-th coordinate and decrements the 
$l$-th one.
There are $d(d-1)+1$ such maps. We fix an arbitrary  total
order on the set consisting of these $d(d-1)+1$ maps.
For every vector $\vvv=(v_i)_{i\in \ZZ/d\ZZ}$ in $\cI$, there is at least one
map $\kappa_{k,l}$ such that $\kappa_{k,l}(\vvv)$ is in
$\cJ$: 
\begin{itemize}
\item if $\vvv$ is already in $\cW$, we apply the identity
$\kappa_{0,0}$ to $\vvv$; 
\item 
otherwise, 
we assume for instance that the $l$-th coordinate is positive.
We set $k=l-\sum_{i\in \ZZ/d\ZZ}iv_i \bmod d$ and we can check that
$\kappa_{k,l}(\vvv)$ is in $\cW$ and its norm is not bigger than
the norm of $\vvv$.
\end{itemize}

For every vector $\vvv$ in $\cI$, we call $\kappa (\vvv)$
the image of $\vvv$ by the smallest map 
$\kappa_{k,l}$ such that $\kappa_{k,l}(\vvv)$ is in
$\cJ$. This way, we define a map $\kappa : \cI\rightarrow \cJ$.
Every element in $\cJ$ has at most $d(d-1)+1$ preimages by $\kappa$.
Therefore, the sizes of $\cI$ and $\cJ$ are related by the following 
inequation,
\begin{equation*}\label{eq:IJ}
\# \cJ\geqslant \frac{\# \cI}{d^2}\,.
\end{equation*} 

We know from Lemma~\ref{lemma:count} that $\log \# \cI \geqslant 1.74498\times d$ if
$d\geqslant 2001$. We deduce that $\log \#\cJ\geqslant (1.74498 -0.0076)\times
d$ in this case. Hence, we have the following
lemma.

\begin{lemma}\label{lemma:bound}
If $d\geqslant 2001$ is an odd integer, the set $\cT/\bK^*$
consisting of elliptic units (modulo constants) having degree 
$\leqslant (d-1)/2$ has cardinality
\begin{equation*}\label{eq:bound}
\# (\cT/\bK^*)\geqslant \exp(1.73738\times d)\,.
\end{equation*}

\end{lemma}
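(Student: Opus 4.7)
The plan is to combine the three tools already established in this subsection: the bijection $V: \cU/\bK^*\to \cW$ from Section~\ref{subsection:units}, the retraction $\kappa: \cI\to \cJ$ built by shifting one coordinate, and the cardinality estimate of Lemma~\ref{lemma:count}. The chain of reasoning will be
\[
\#(\cT/\bK^*) \;=\; \#\cJ \;\geqslant\; \frac{\#\cI}{d^{2}} \;\geqslant\; \frac{\exp(1.74498\,d)}{d^{2}} \;\geqslant\; \exp(1.73738\,d),
\]
where the last inequality is the only thing left to verify by hand.

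First I would pin down the identification $\cT/\bK^*\leftrightarrow \cJ$. A unit $u\in\cU$ has divisor supported on $\langle T\rangle$ with coordinate vector $V(u)\in\cW$, and its degree equals the number of poles counted with multiplicity, which is exactly half the $L^1$-norm of $V(u)$ because the coordinates sum to zero. Hence $\deg(u)\leqslant (d-1)/2$ is equivalent to $\|V(u)\|_{1}\leqslant d-1$, so that $V$ restricts to a bijection from $\cT/\bK^*$ onto $\cJ=\cI\cap \cW$.

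Next I would invoke the inequality $\#\cJ\geqslant \#\cI/d^{2}$ already derived from the retraction $\kappa$: every vector of $\cI$ can be brought into $\cW$ by a single move of the form $\kappa_{k,l}$, and each target in $\cJ$ has at most $d(d-1)+1\leqslant d^{2}$ preimages. Plugging in the lower bound $\#\cI\geqslant \exp(1.74498\,d)$ from Lemma~\ref{lemma:count} reduces the proof to showing
\[
\exp(1.74498\,d) \;\geqslant\; d^{2}\exp(1.73738\,d), \qquad\text{i.e.,}\qquad 0.00760\,d \;\geqslant\; 2\log d,
\]
for all odd $d\geqslant 2001$.

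The only genuine obstacle is this last numerical step, which is tight at the threshold: at $d=2001$ one has $0.00760\cdot 2001 \approx 15.208$ while $2\log 2001\approx 15.204$, so the inequality just barely holds. I would verify it at $d=2001$ by direct evaluation and then observe that the function $\varphi(d)=0.00760\,d-2\log d$ has derivative $0.00760-2/d>0$ for $d\geqslant 2001>263$, so $\varphi$ is increasing on $[2001,\infty)$. Combining this verification with the previous display yields the required bound.
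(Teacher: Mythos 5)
Your proof is correct and follows essentially the same route as the paper: identify $\cT/\bK^*$ with $\cJ$ via the valuation map $V$, combine the bound $\#\cJ\geqslant\#\cI/d^2$ (from the retraction $\kappa$) with Lemma~\ref{lemma:count}, and verify $2\log d\leqslant 0.0076\,d$ for $d\geqslant 2001$. You merely spell out two steps the paper leaves implicit — the equivalence between $\deg(u)\leqslant (d-1)/2$ and $\|V(u)\|_1\leqslant d-1$, and the monotonicity argument for the final numerical inequality — both of which are sound.
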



\subsection{A strong  primality criterion}\label{subsection:strong}

Assume that we are in the situation of Section~\ref{section:AKSTEST}.
We  are given an integer $n\geqslant 2$ and we
set $R=\ZZ/n\ZZ$. Let $E$ be an  elliptic curve 
over $R$, let 
$d\geqslant 2001$ be  a prime to $2n$ integer and 
let $T\in E(R)$ be a section of exact order $d$. 
We call $I : E\rightarrow E'$ the quotient by $\langle T\rangle$ 
isogeny  as given by V{\'e}lu's formulae.
Assume we  are given a section $A\in E'_{\text{aff}}(R)$ and  call
\begin{equation*}
\Fgot_A=(x'-x'(A),y'-y'(A))
\end{equation*}
the ideal of $I^{-1}(A)$
in $R[E-E[d]]$.
We assume that  $D(x'(A))$ is a  unit in $R$. 
We call $S=R[x,y,\frac{1}{\psi_d(x)}]/(x'-x'(A),y'-y'(A))$ the 
ring of elliptic periods.
We define the functions $(u_l)_{l\in \ZZ/d\ZZ}$ as in Section~\ref{subsubsection:ENB}. 
There is a unique multiple $\hat T$ of $T$
such that $T=2\HT$.
We set $\eta = u_0(\HT)\in R$.
If $l=2k\bmod d$, we set $\hu_{l}=u_k-\eta$.
We set $\theta_k=u_k\bmod \Fgot_a$ and 
$\htheta_l=\theta_k-\eta$.

Assume now that the following equality  holds true in the ring $S$: 
\begin{equation}\label{eq:congs}
(\htheta_0)^n=\htheta_1\,.
\end{equation}

Let $\hsigma : R[E-\langle T\rangle]\rightarrow R[E-\langle T\rangle]$ be the automorphism induced
on $R[E-\langle T\rangle]$   by  the translation $\tau_{-\HT}$,
\begin{displaymath}
  \begin{array}{crcl}
    \hsigma :& R[E-\langle T\rangle] & \longrightarrow  &   R[E-\langle T\rangle]\,,\\
    &f & \longmapsto  &f\circ\tau_{-T}\,.
  \end{array}
\end{displaymath}
We also denote by $\hsigma : S\rightarrow S$
the  induced map on $S$.
Letting $\hsigma$ repeatedly act on Eq.~(\ref{eq:congs}), 
we deduce that for any
$k\in \ZZ/d\ZZ$, $\htheta_k$ is a power of $\htheta_0$.
In particular, the product  $\prod_k\htheta_k$ is a power of $\htheta_0$.
But Eq.~(\ref{eq:produk}) tells us that 
 this product is a unit in $R$.  So $\htheta_0$ is a unit.
\medskip

Let $p$ be any prime divisor of $n$.
We set $a=\htheta_0 \bmod p \in S/pS$.
We  show  that
the order of $a$
in $(S/pS)^*$ is large.

Let   $\vvv$ be a vector
in  $\cJ\subset \ZZ^d$. 
Let  $(w_k)_{0\leqslant k\leqslant  d-2}$ be
 the coordinates of $\vvv$ in the basis
$(W_k)_{0\leqslant k\leqslant d-2}$ of $\cW$ defined at the end 
of Section~\ref{subsection:units}.
Let $f_{\vvv}=\prod_{0\leqslant k\leqslant d-2}\hu_k^{w_k}$
be the unique multiplicative combination  of the
$\hu_k$ such that  $V(f_\vvv \bmod p)=\vvv$, where
$V$ is  the valuation map
 defined in Eq.~(\ref{eq:V}). 
We note  that $f_{\vvv}\bmod (\Fgot_A,p)=\prod_{0\leqslant k\leqslant d-2} (\htheta_k\bmod p)^{w_k}$
is a power of $a$. Since
$\vvv$ is in $\cJ$, we know   that $f_{\vvv}\bmod p$ has degree $\leqslant (d-1)/2$.
Let  $\vvvone$ 
and $\vvvtwo$ be  two  distinct vectors in $\cJ$.
Let $l_1$ and  $l_2$ be two integers that
are relatively prime to $p$.
Then $l_1f_{\vvvone}\not =l_2f_{\vvvtwo}\bmod (\Fgot_A,p)$ unless $\vvvone=\vvvtwo$
and $l_1=l_2\bmod p$. 
Indeed, if $l_1f_{\vvvone} =l_2f_{\vvvtwo}\bmod (\Fgot_A,p)$ 
then $l_1f_{\vvvone}-l_2f_{\vvvtwo}
\bmod p$ is a function on $E\bmod p$  with degree $\leqslant d-1$
and it cancels
on the degree $d$ divisor $I^{-1}(A)\bmod p$.  So 
$l_1f_{\vvvone} =l_2f_{\vvvtwo}\bmod p$. Therefore,
$f_{\vvvone}$ and $f_{\vvvtwo}$  have the same divisor.
We deduce that $\vvvone=\vvvtwo$. Therefore $l_1=l_2\bmod p$ also.

Using Theorem~\ref{th:AKSgen} and  the lower bound in Lemma~\ref{lemma:bound},
 we deduce the following corollary.
\begin{corollary}[Strong elliptic AKS criterion]\label{cor:AKSS}
Let $n\geqslant 2$  be an integer and let $E$
be an  elliptic curve over $R=\ZZ/n\ZZ$.
Let $T\in E(R)$ be a section of exact order $d$ where
$d\geqslant 2001$ is a prime to $2n$  integer.
 Let $E'$
be the quotient $E/\langle T \rangle$ given by V{\'e}lu's formulae.
Let $A\in E'_{\text{aff}}(R)$ be a section such that the vector
$\ve = \left( \egot_{k}(x'(A)) \right)_{k}$ defined
by Eq.~(\ref{eq:egot}) is 
invertible
for the convolution product $\star$ on $R^d$. Assume the 
congruence 
\begin{equation*}
(\htheta_0)^n=\htheta_1
\end{equation*}
holds true in the ring of elliptic periods $S=R[x,y,{1}/{\psi_d(x)}]/(x'-x'(A),
y'-y'(A))$.

Assume further that 
\begin{equation*}
\exp(1.73738\times d)   \geqslant n^{\sqrt{d}}\,.
\end{equation*}

Then $n$ is a prime power.
\end{corollary}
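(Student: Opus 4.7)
The plan is to apply Theorem~\ref{th:AKSgen} with the automorphism
$\hsigma$ (translation by $-\HT$, where $T = 2\HT$) and the element
$\htheta_0$. Because $d$ is odd, $2$ is invertible modulo $d$, so
$\HT = \tfrac{d+1}{2}T$ has exact order $d$ in $E(R)$ and
$\hsigma = \sigma^{(d+1)/2}$ still generates the cyclic group
$\langle \sigma \rangle$ of order $d$. Consequently
$(\hsigma^k(\theta_0))_{0 \leqslant k \leqslant d-1}$ is a permutation of the
$R$-basis $(\theta_k)_k$ supplied by Theorem~\ref{th:tensor}, so $S$ is a
free $R[\hsigma]$-module of rank one (with generator $\theta_0$). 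The
relation $\hu_l = \hu_0 \circ \tau_{-l\HT}$ translates to
$\hsigma(\htheta_0) = \htheta_1$, so the assumed congruence takes the
form $\htheta_0^n = \hsigma(\htheta_0)$ required by Theorem~\ref{th:AKSgen}.
\'Etaleness of $S$ over $R$ (again Theorem~\ref{th:tensor}) ensures that
$S/pS$ is reduced for every prime divisor $p$ of $n$.

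Next I would show that $\htheta_0$ is a unit. Iterating $\hsigma$ on the
congruence yields $\htheta_k^n = \htheta_{k+1}$, so every $\htheta_k$ is
a positive power of $\htheta_0$; in particular
$\prod_{k\in\ZZ/d\ZZ}\htheta_k$ is a power of $\htheta_0$. But by
Eq.~(\ref{eq:produk}), the analogous product $\prod_{k}\hu_k$ lies in
$\bK^*$, hence its image in $S$ is a unit. Therefore $\htheta_0$ itself
is a unit.

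The core of the argument is the lower bound on the order of
$a = \htheta_0 \bmod p$ in $(S/pS)^*$. For every vector $\vvv \in \cJ$
with coordinates $(w_k)_{0 \leqslant k \leqslant d-2}$ in the $\ZZ$-basis
$(W_k)_k$ of $\cW$ from Section~\ref{subsection:units}, I form
\[
f_\vvv = \prod_{0 \leqslant k \leqslant d-2} \hu_k^{w_k}
\]
in the function field of $E \bmod p$. Each $\hu_k \bmod (\Fgot_A, p)
= \htheta_k \bmod p$ is a power of $a$ (by paragraph~2), so
$f_\vvv \bmod (\Fgot_A, p)$ is a power of $a$. Its divisor is
$V^{-1}(\vvv)$, so the degree of $f_\vvv$ equals half the $L^1$-norm of
$\vvv$, hence is $\leqslant (d-1)/2$ by definition of $\cJ$. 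The key
injectivity step: if $\vvvone \neq \vvvtwo$ in $\cJ$ and
$l_1, l_2 \in \Fps$ satisfy $l_1 f_{\vvvone} \equiv l_2 f_{\vvvtwo}
\pmod{(\Fgot_A, p)}$, then $l_1 f_{\vvvone} - l_2 f_{\vvvtwo} \bmod p$
is a function on $E \bmod p$ of total degree $\leqslant d-1$ vanishing on
the effective degree-$d$ divisor $I^{-1}(A) \bmod p$ (effective of degree
$d$ because $D(x'(A))$ is a unit in $R$, so the fibre remains \'etale
modulo $p$); such a function is identically zero, and equating divisors
forces $\vvvone = \vvvtwo$ (and then $l_1 = l_2$).

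Combining the injectivity of $\vvv \mapsto f_\vvv \bmod (\Fgot_A, p)$ with
Lemma~\ref{lemma:bound} and the hypothesis yields
\[
\mathrm{ord}(a) \;\geqslant\; \#\cJ \;\geqslant\; \exp(1.73738\times d)
\;\geqslant\; n^{\sqrt d} \;\geqslant\; n^{\lfloor \sqrt d \rfloor},
\]
which is exactly the bound demanded by Theorem~\ref{th:AKSgen}; that
theorem then concludes $n$ is a prime power. I expect the main obstacle
to be the geometric degree argument in the third paragraph: one must lift
the identity from the artinian ring $S/pS$ to the function field of
$E\bmod p$ over $\Fpb$, control the degree of each $f_\vvv$ by its
divisor supported on $\langle T\rangle$, and exploit the equality
$\deg(I^{-1}(A)\bmod p) = d$ to force the difference to vanish.
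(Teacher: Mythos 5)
Your proposal is correct and follows essentially the same route as the paper: reduce to Theorem~\ref{th:AKSgen} via the shifted automorphism $\hsigma$ (translation by $-\HT$), verify that $\htheta_0$ is a unit using Eq.~(\ref{eq:produk}), and lower-bound the order of $a=\htheta_0\bmod p$ by mapping each $\vvv\in\cJ$ to the power $f_\vvv\bmod(\Fgot_A,p)$ of $a$ and showing injectivity by the degree argument (a nonzero function of degree $\leqslant d-1$ cannot vanish on the degree-$d$ divisor $I^{-1}(A)\bmod p$), then invoking Lemma~\ref{lemma:bound}. The only substantive addition you make is spelling out why the hypotheses of Theorem~\ref{th:AKSgen} are met for the pair $(\hsigma,\htheta_0)$ -- namely, $\hsigma=\sigma^{(d+1)/2}$ is again a generator of $\langle\sigma\rangle$ because $(d+1)/2$ is coprime to $d$, so $(\hsigma^k(\theta_0))_k$ is still an $R$-basis and $\hsigma(\htheta_0)=\htheta_1$ -- a point the paper leaves implicit.
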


\begin{appendix}

\section{}
\label{section:appendix}

\subsection{A determinant}\label{subsection:det}

We first compute a determinant that is useful 
in Section~\ref{subsection:units}.
For every integer $n\geqslant 1$, we define $D_n$
to be the determinant of the matrix defined by Eq.~\eqref{eq:vandermonde}.
%
%

We have $D_1=2$ and  $D_2=3$.
We develop the determinant $D_n$ along the first column and
find that 
\begin{math}
D_n=2D_{n-1}-D_{n-2}
\end{math}
for any $n\geqslant 3$.
We deduce, for any $n\geqslant 1$,
\begin{equation}\label{eq:det}
D_n=n+1\,.
\end{equation}

\subsection{Lower bounds for binomial coefficients}\label{subsection:binom}

In this paragraph, we compute effective lower bounds for binomial coefficients.
These estimates will be useful in Section~\ref{subsection:enum}.
Let  $K\geqslant 2$ be  an integer and let $(d_k)_{1\leqslant  k \leqslant K}$ be a family
of positive  integers. We set  $d=\sum_{1\leqslant k \leqslant K}d_k$ and
$\alpha_k=d_k/d$. We set $\valpha= (\alpha_1, \ldots, \alpha_K)$ and define the corresponding entropy to be
\begin{displaymath}
H(\valpha)=H(\alpha_1, \ldots, \alpha_K)=-\alpha_1\log \alpha_1-\alpha_2\log
\alpha_2-\dots - \alpha_K\log \alpha_K.
\end{displaymath}
We recall  Robbins effective Stirling formula \cite{robbins}. For every
positive integer $d$,
\begin{equation*}
\sqrt{2\pi d}\left( \frac{d}{e} \right)^d\exp(\frac{1}{12d+1})\leqslant d!\leqslant \sqrt{2\pi d}\left( \frac{d}{e} \right)^d\exp(\frac{1}{12d})\,.
\end{equation*}
We deduce
\begin{multline*}
(2\pi d)^{\frac{1-K}{2}}
  \exp(d\times H(\alpha_1, \ldots, \alpha_K)+\frac{1}{13}-\frac{K}{12}) \leqslant  \left(\begin{array}{c}{d}\\ d_1 d_2
    \dots d_K \end{array}\right) \leqslant \\ (2\pi d)^{\frac{1-K}{2}}
  \exp(d\times H(\alpha_1, \ldots, \alpha_K)+\frac{1}{12}-\frac{K}{13})\,.
\end{multline*}

We shall need the following definition.

\begin{definition}
Let $\vbeta= (\beta_k)_{1\leqslant k\leqslant K}$  be a family of reals in $]0,1[$ such that
    $\sum_{1\leqslant k\leqslant K} \beta_k=1$. Let  $d$ be a positive
    integer.  
We assume $\beta_k>{1}/{d}$ for every $1\leqslant k\leqslant K$.
For every integer $k$   such that $1\leqslant k\leqslant K-1$,
 set $d_k=\lfloor \beta_k d\rfloor$. We observe that  $d_k$ is positive.
Set $d_{K}=d-\sum_{1\leqslant k\leqslant K-1}d_k$. It is positive also. The {\it rounded } multinomial
coefficient associated to $d$ and $\vbeta$ is defined to be
\begin{displaymath}
\binom d \vbeta = \left(\begin{array}{c}{d}\\ d_1,  d_2, 
    \dots , d_K \end{array}\right).
\end{displaymath}
\end{definition}

In order to find  a nice lower bound for  this coefficient, we set $\alpha_k=d_k/d$ for every $1\leqslant k\leqslant K$.
It is clear that 
\begin{equation*}\label{eq:ab}
\beta_k-\frac{1}{d}\leqslant \alpha_k\leqslant \beta_k\,,
\text{ for $1\leqslant k\leqslant K-1$, and } 
\beta_K\leqslant \alpha_K\leqslant \beta_K+\frac{K}{d}\,.
\end{equation*}

We set $\mu=\max(-\log(\min_{1\leqslant k\leqslant K}(\beta_k-{1}/{d}))-1,1)$ and  we notice that
for any $1\leqslant k\leqslant K$ the derivative of $z\mapsto -z\log z$ is bounded by
$\mu$ in absolute value between $\alpha_k$ and $\beta_k$. Since 
\begin{math}
|\beta_k-\alpha_k|\leqslant {1}/{d}
\end{math}
when $1\leqslant k\leqslant K-1$ and $|\beta_K-\alpha_K|\leqslant {K}/{d}$, we deduce
\begin{equation*}
|H(\alpha_1,\alpha_2,\ldots, \alpha_K)-H(\beta_1,\beta_2,\ldots, \beta_K)|\leqslant \frac{2K\mu}{d}\,.
\end{equation*}
And thus,
\begin{equation}\label{eq:entr}
\frac{1}{d}\log   \binom d \vbeta  \geqslant 
H(\vbeta)-\frac{2K\mu}{d} +\frac{(1-K)\log 2\pi
  d}{2d}+\frac{1}{d}\left(\frac{1}{13}-\frac{K}{12}\right)\,.
\end{equation}

\subsection{An enumeration problem}\label{subsection:enum}

Let $d\geqslant 3$ be an odd integer. We are interested in the set $\cS_d$ of vectors $\ve= (e_1, e_2,
\ldots, e_d)$ in $\ZZ^d$ such that
the sum $\sum_{1\leqslant k\leqslant d} e_k$ of all coordinates is zero
and the $L^1$-norm $\sum_{1\leqslant k\leqslant d}|e_k|$ of $\ve$ is $d-1$.

We look for  a lower  bound for the cardinality of $\cS_d$.
To every  vector $\ve$ in $\cS_d$, we associate a partition of $\{1, 2, \ldots, d\}$
in three sets $E_0$, $E_+$, $E_{-}$ corresponding to the indices with zero,
positive and negative coordinates respectively. 
The sum of positive coordinates equals $(d-1)/2$. 
The sum of negative  coordinates equals $-(d-1)/2$.

We fix a real number $\beta \in ]0,\frac{1}{2}[$ and define the subset 
$\cS_{d,\beta}\subset \cS_d$ consisting of vectors in $\cS$ having
exactly $\lfloor \beta d \rfloor$ positive coordinates and $\lfloor \beta d
\rfloor$ negative coordinates. We assume $\beta d\geqslant 1$.
The number of elements in $\cS_{d,\beta}$ is
\begin{equation}\label{eq:prod3}
\#\cS_{d,\beta} =  \trinom d {\lfloor \beta d \rfloor} {\lfloor \beta d \rfloor} {d-2\lfloor
  \beta d \rfloor } \binom {\frac{d-1}{2}-1}{\lfloor \beta d \rfloor -1 }  \binom
{\frac{d-1}{2}-1}{\lfloor \beta d \rfloor-1} \,.
\end{equation}

The first factor in the product above
is the number of corresponding partitions $E_0\cup E_+\cup E_{-}$. The second
factor is the number of ways one can write $(d-1)/2$ as a sum of $\lfloor
\beta d \rfloor$ strictly positive integers.
The third 
factor is the number of ways one can write $-(d-1)/2$ as a sum of $\lfloor
\beta d \rfloor$ strictly negative integers.

We want to choose the real $\beta$ so as to make the product in
Eq.~(\ref{eq:prod3}) as big as possible.
The logarithm of this product divided by $n$ tends to
$H(\beta,\beta,1-2\beta)+H(2\beta,1-2\beta)$ as $n$ tends to infinity. This expression is maximal
for
$\beta={1}/({2+\sqrt 2})$ and its value is then bigger than $1.7627$.
We set $\beta={1}/({2+\sqrt 2})$ and we  look for an effective lower bound
for every factor in Eq.~(\ref{eq:prod3}).

We first apply Eq.~(\ref{eq:entr}) for $K=3$, $\vbeta
=(\beta,\beta,1-2\beta)$, $\mu =1$, $H(\beta,\beta,1-2\beta)\geqslant 1.08439$
 and $d\geqslant 2001$. We find that
\begin{equation}\label{eq:premfact}
\frac{1}{d}\log \trinom d {\lfloor \beta d \rfloor} {\lfloor \beta d \rfloor} {d-2\lfloor
  \beta d \rfloor }
\geqslant 1.08439-0.00781=1.07658\,.
\end{equation}

We now notice that ${\lfloor \beta d \rfloor}-1\geqslant
\left({(d-1)}/{2}-1\right)/2$ and
$\lfloor \beta'(d-3)\rfloor \geqslant {\lfloor \beta d \rfloor}-1$ provided
${\beta'}/{\beta}\geqslant {d}/({d-3})$, which is guaranteed by setting 
$\beta' =  0.29334$. So,
\begin{equation}\label{eq:2fact}
\binom {\frac{d-1}{2}-1}{\lfloor \beta d \rfloor -1 }\geqslant \binom
       {\frac{d-3}{2}}{\lfloor  2\beta'\frac{d-3}{2} \rfloor  }\,.
\end{equation}
We then apply Eq.~(\ref{eq:entr}) for $K=2$,
$\vbeta=(2\beta',1-2\beta')$, $\mu=1$, $H(2\beta',1-2\beta')\geqslant 0.678$,
and $d\geqslant 999$. We find that
\begin{equation*}
\frac{1}{d}\log \binom d {\lfloor 2\beta' d \rfloor}  
\geqslant 0.678-0.0085=0.6695\,.
\end{equation*}
If we substitute $d$ by $(d-3)/2$ in the above formula, we obtain, for $d\geqslant 2001$,
\begin{equation}\label{eq:3fact}
\frac{1}{d}\log \binom {\frac{d-3}{2}}{\lfloor 2\beta' \frac{d-3}{2} \rfloor}
\geqslant 0.6695\times \frac{d-3}{2d}\geqslant 0.3342\,.
\end{equation}
Combining Eqs.~(\ref{eq:prod3}), (\ref{eq:premfact}), 
(\ref{eq:2fact}), and ~(\ref{eq:3fact}), we deduce the following lemma.

\begin{lemma}\label{lemma:count}
Let $d\geqslant 2001$ be an odd integer and let $\cS_d\subset \ZZ^d$ be 
the set of vectors having  $L^1$-norm equal to $d-1$ and the sum of all
coordinates equal to $0$.
We have
\begin{displaymath}
  \log \#\cS_d\geqslant 1.74498\times d\,.
\end{displaymath}

\end{lemma}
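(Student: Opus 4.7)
The plan is to give a purely combinatorial lower bound on $\#\cS_d$ by first parametrising each vector $\ve \in \cS_d$ through the partition of $\{1,\dots,d\}$ into the sets $E_0$, $E_+$, $E_-$ of indices on which the coordinate is zero, strictly positive, strictly negative. Since the sum of all coordinates is zero and the $L^1$-norm is $d-1$, the positive coordinates must sum to $(d-1)/2$ and the negative ones to $-(d-1)/2$. This turns the enumeration into: (i) choose the three index sets; (ii) choose a composition of $(d-1)/2$ into $\#E_+$ strictly positive parts; (iii) independently choose a composition of $(d-1)/2$ into $\#E_-$ strictly positive parts.

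Next I would select a real number $\beta\in (0,\tfrac12)$, restrict to the subset $\cS_{d,\beta}\subset \cS_d$ of vectors with exactly $\lfloor\beta d\rfloor$ positive and $\lfloor\beta d\rfloor$ negative coordinates, and use the identity
\begin{equation*}
\#\cS_{d,\beta} = \trinom d {\lfloor \beta d \rfloor} {\lfloor \beta d \rfloor} {d-2\lfloor \beta d \rfloor }
\binom {\frac{d-1}{2}-1}{\lfloor \beta d \rfloor -1}
\binom {\frac{d-1}{2}-1}{\lfloor \beta d \rfloor -1}
\end{equation*}
from Eq.~(\ref{eq:prod3}). Taking logarithms and dividing by $d$, the dominant behaviour is governed by $H(\beta,\beta,1-2\beta)+H(2\beta,1-2\beta)$, and a short computation of the derivative shows this quantity is maximised at $\beta = 1/(2+\sqrt{2})$, where it exceeds $1.7627$. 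So I fix this value of $\beta$.

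The remaining work is the quantitative comparison between $\tfrac1d\log\#\cS_{d,\beta}$ and the asymptotic entropy value. For this I would invoke the effective entropy estimate for rounded multinomial coefficients, namely Eq.~(\ref{eq:entr}). Applied with $K=3$ and $\vbeta=(\beta,\beta,1-2\beta)$, it yields Eq.~(\ref{eq:premfact}). For each of the two binomial factors, I would first lower them by $\binom{(d-3)/2}{\lfloor 2\beta'(d-3)/2\rfloor}$ with $\beta'=0.29334$ (this is the content of Eq.~(\ref{eq:2fact}), and the numerical value of $\beta'$ is chosen so that $\beta'/\beta \geqslant d/(d-3)$ for all $d\geqslant 2001$), then apply Eq.~(\ref{eq:entr}) again with $K=2$ and $\vbeta=(2\beta',1-2\beta')$, which gives Eq.~(\ref{eq:3fact}). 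Summing the three contributions $1.07658 + 2\times 0.3342 \geqslant 1.74498$ for $d\geqslant 2001$ finishes the proof, since $\#\cS_d \geqslant \#\cS_{d,\beta}$.

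The main obstacle is quantitative rather than conceptual: the asymptotic rate is $\approx 1.7627$ while the desired bound is $1.74498$, so the cumulative slack from using Robbins' version of Stirling's formula, from rounding $\beta d$ and $2\beta'(d-3)/2$ to integers, and from reducing the binomial range by three must be kept small. This is why the hypothesis $d \geqslant 2001$ appears, and why an intermediate parameter $\beta'\neq \beta$ has to be introduced to absorb the $d\to (d-3)/2$ passage in the binomial factors. Once these choices are pinned down, each of the three estimates is just a careful application of Eq.~(\ref{eq:entr}).
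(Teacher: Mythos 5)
Your proposal follows the paper's own proof step for step: the same decomposition of $\cS_d$ via the partition $E_0\cup E_+\cup E_-$, the same restriction to $\cS_{d,\beta}$ with the identity in Eq.~(\ref{eq:prod3}), the same optimal choice $\beta=1/(2+\sqrt2)$, and the same two applications of Eq.~(\ref{eq:entr}) (with $K=3$ and $K=2$, the latter after the $\beta'$ shift in Eq.~(\ref{eq:2fact})), arriving at $1.07658+2\times 0.3342\geqslant 1.74498$. The argument is correct and essentially identical to the one in Section~\ref{subsection:enum}.
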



\end{appendix}

\fi

\end{document}

